\newtheorem{theorem}{Theorem}[section]
\newtheorem{lemma}[theorem]{Lemma}
\newtheorem{proposition}[theorem]{Proposition}
\newtheorem{corollary}[theorem]{Corollary}
\theoremstyle{definition}
\newtheorem{definition}[theorem]{Definition}
\newtheorem{remark}[theorem]{Remark}
\newtheorem*{remark*}{Remark}
\newtheorem*{remarks*}{Remarks}
\newtheorem*{definition*}{Definition}
\newtheorem{notn}{Notation}
\begin{document}
\title{Spectral triples on $O_N$}
\author[M. Goffeng]{Magnus Goffeng}
\address{\normalfont{Department of Mathematical Sciences, Chalmers University of Technology and the University of Gothenburg, SE-412 96 Gothenburg, Sweden}}
\email{goffeng@chalmers.se}
\author[B. Mesland]{Bram Mesland}
\address{\normalfont{Institut f\"{u}r Analysis, Leibniz Universit\"{a}t Hannover, Welfengarten 1, 30167,
Hannover, Germany}}
\email{mesland@math.uni-hannover.de}

\begin{abstract}
We give a construction of an odd spectral triple on the Cuntz algebra $O_{N}$, whose $K$-homology class generates the odd $K$-homology group $K^1(O_{N})$. Using a metric measure space structure on the Cuntz-Renault groupoid, we introduce a singular integral operator which is the formal analogue of the logarithm of the Laplacian on a Riemannian manifold. Assembling this operator with the infinitesimal generator of the gauge action on $O_{N}$ yields a $\theta$-summable spectral triple whose phase is finitely summable. The relation to previous constructions of Fredholm modules and spectral triples on $O_{N}$ is discussed.
\end{abstract}

\maketitle
\section{Introduction}
\label{sec:1}

We give a geometrically inspired construction of spectral triples on the Cuntz algebra $O_N$ with non-trivial $K$-homological content. One reason such spectral triples have been elusive is Connes' construction of traces from finitely summable spectral triples \cite{Connestrace}. Purely infinite $C^*$-algebras such as $O_{N}$ are traceless and should thus be viewed as infinite dimensional objects, at best carrying $\theta$-summable spectral triples. Another difficulty is presented by the fact that the $K$-homology of $O_{N}$ is torsion, so the index pairing cannot be used to detect $K$-homology classes.

In the literature, several approaches to noncommutative geometry on Cuntz-Krieger algebras have been explored. The crossed product $C^{*}$-algebra associated to the action of a free group on its Gromov boundary gives rise to a Cuntz-Krieger algebra, and in \cite{CMRV} that geometric picture is used to establish the existence of $\theta$-summable spectral triples on such $C^{*}$-algebras. On the other hand, twisted noncommutative geometries \cite{cmtwisted} circumvent the obstruction to finite summability whereas semifinite noncommutative geometries \cite{paskrennie} allow for the extraction of index-theoretic invariants.

Recent years have seen explicit constructions of spectral triples on Cuntz-Krieger algebras \cite{GM} and more generally on Cuntz-Pimsner algebras \cite{GMR}, originating in the dynamics of subshifts of finite type. Their classes in $K$-homology were computed in \cite{GM} using Poincar\'{e} duality and extension theory (see \cite{kaminkerputnam,kasparov}), thus bypassing the difficulties discussed above. These spectral triples have the remarkable feature that they are $\theta$-summable, but their bounded transforms $\chi(D)$, using a suitably chosen function $\chi\in C_b(\mathbb{R})$ such that $\lim_{t\to \pm\infty}\chi(t)=\pm 1$, are finitely summable. Providing a geometric context and understanding the distinct dimensional behaviours of bounded and unbounded Fredholm modules over $O_N$ is the main problem motivating this paper.

Using the metric and Patterson-Sullivan measure on the full $N$-shift, we equip the Cuntz-Renault groupoid with the structure of a metric measure space. Then we consider a singular integral kernel formally similar to that of the logarithm of the Laplacian on a closed Riemannian manifold. We explicitly relate the associated integral operator to the depth-kore operator from \cite{GMR}, yielding a geometric construction of a $K$-homologically non-trivial noncommutative geometry on $O_N$. 

\section{Statement of results on $O_N$}

Before stating our results, we recall several notions from noncommutative geometry. The reader familiar with summability properties in noncommutative geometry and the groupoid model of $O_N$ can proceed to page \pageref{mainthm} for the main results. 

Let $A$ be a unital $C^*$-algebra. A spectral triple is a triple $(A,\mathcal{H},D)$ where $A$ acts unitally on the Hilbert space $\mathcal{H}$ and $D$ is a self-adjoint operator with compact resolvent on $\mathcal{H}$ such that 
$$\mathrm{Lip}_D(A):=\{a\in A: \; a\mathrm{Dom}(D)\subseteq \mathrm{Dom}(D)\;\mbox{  and  }\;[D,a]\;\mbox{  is bounded}\}\subseteq A\;\mbox{  is dense.}$$
A spectral triple is sometimes called an unbounded Fredholm module. A bounded Fredholm module is a triple $(A,\mathcal{H},F)$ as above safe the fact that $F$ is a bounded operator assumed to satisfy that $F^2-1, F-F^*, [F,a]\in \mathbb{K}(\mathcal{H})$ for any $a\in A$. 

Dimensional properties of (un)bounded Fredholm modules are described in terms of operator ideals. For a compact operator $T$ on a Hilbert space $\mathcal{H}$, we denote by $\mu_{k}(T)$ its sequence of singular values. Given $p\in (0,\infty)$, let $\mathcal{L}^{p}(\mathcal{H}),$ denote the $p$-th Schatten ideal and $\mbox{Li}^{1/p}(\mathcal{H})\subset\mathbb{K}(\mathcal{H})$ the symmetrically normed ideal defined by
$$\mbox{Li}^{1/p}(\mathcal{H}):=\{T\in \mathbb{K}(\mathcal{H}): \mu_{k}(T)=O((\log k)^{-1/p})\}. $$

An unbounded Fredholm module $(A,\mathcal{H},D)$ is said to be $p$-summable if $(D\pm i)^{-1}\in \mathcal{L}^p (\mathcal{H})$ and $\theta$-summable if $(D\pm i)^{-1}\in \mbox{Li}^{1/2} (\mathcal{H})$. Note that $\theta$-summability is equivalent to requiring that $e^{-tD^{2}}\in\mathcal{L}^{1}(\mathcal{H})$ for all $t>0$.  
A bounded Fredholm module $(A,\mathcal{H},F)$ is said to be $p$-summable if  $$F^2-1, F-F^*\in \mathcal{L}^{p/2}(\mathcal{H}), \quad \mbox{and }\quad [F,a]\in \mathcal{L}^p(\mathcal{H}),$$ 
and $\theta$-summable if
$$F^2-1, F-F^*\in \mbox{Li}(\mathcal{H}), \quad \mbox{and }\quad [F,a]\in \mbox{Li}^{1/2}(\mathcal{H}),$$ 
for all $a$ in a dense subalgebra of $A$. 

We emphasize the difference between the two definitions. Summability of an unbounded Fredholm module is a property of the operator $D$, whereas summability of a bounded Fredholm module is a property of the operator $F$ and of its commutators with the algebra $A$. The two notions are related as follows. If $(A,\mathcal{H},D)$ is a $p$-summable (resp. $\theta$-summable) unbounded Fredholm module, then $(A,\mathcal{H},\chi(D))$ is a $p$-summable (resp. $\theta$-summable) bounded Fredholm module if $\chi\in C_b(\mathbb{R})$ is a function satisfying $\chi^2=1+O(|x|^{-2})$ as $|x|\to \infty$. Conversely, a $\theta$-summable bounded Fredholm module can be lifted to a $\theta$-summable unbounded Fredholm module, see \cite[Chapter IV.8, Theorem 4]{connesbook}. This result fails for finite summability, as is shown in particular by the examples in this paper.

Any $K$-homology class on a Cuntz-Krieger algebra is represented by a finitely summable bounded Fredholm module \cite{GM}. In general, Cuntz-Krieger algebras admit no finitely summable spectral triples, as discussed above. This phenomenon is widespread and, for instance, occurs for boundary crossed product algebras of hyperbolic groups \cite{emersonnica}. The action of a free group on its Gromov boundary falls into the class of examples considered in both \cite{emersonnica} and \cite{GM}. To our knowledge, obstructions to finite summability at the bounded level have not been studied. At present, the example \cite[Lemma 6, page 95]{GM} of a $K$-homology class not admitting finitely summable bounded representatives is the only one known to the authors. 

Before stating our main results, we recall some facts about $O_N$ that we review in more detail in Section \ref{sec:2}. For $N>1$, the Cuntz algebra $O_N$ \cite{thealgebraon} is defined as the universal $C^*$-algebra generated by $N$ isometries with orthogonal ranges. As the $C^*$-algebra $O_N$ is simple, it can be constructed in any of its Hilbert space realizations. That is, for any operators $S_1,\ldots ,S_N$ such that $S_j^*S_k=\delta_{jk}$ and $1=\sum_{j=1}^NS_jS_j^*$, $O_N$ is canonically isomorphic to the $C^*$-algebra generated by $S_1,\ldots ,S_N$. 

An important realization of $O_N$ is as the groupoid $C^*$-algebra of the Cuntz-Renault groupoid $\mathcal{G}_N$ introduced in \cite[Section III.2]{Ren1}. The unit space of $\mathcal{G}_N$ is the full one-sided sequence space $\Omega_N:=\{1,\ldots, N\}^{\mathbb{N}}$. We equip $\Omega_N$ with the product topology in which it is compact and totally disconnected. Elements $x\in \Omega_N$ are written $x=x_1x_2\cdots$ where $x_j\in \{1,\ldots, N\}$. The shift $\sigma:\Omega_N\to \Omega_N$ is defined by $\sigma(x_1x_2x_3\cdots)=x_2x_3\cdots$ and is a surjective local homeomorphism. For a finite word $\mu=\mu_1\mu_2\cdots \mu_k\in \{1,\ldots, N\}^k$  we define the cylinder set 
$$C_\mu:=\{x\in \Omega_N: x=\mu x' \mbox{ for some } x'\in \Omega_N\}.$$ We call $|\mu|:=k$ the length of $\mu$. 
As a set, the Cuntz-Renault groupoid is given by
\begin{equation}
\label{defininggn}
\mathcal{G}_N:=
\{(x,n,y)\in \Omega_N\times \mathbb{Z}\times \Omega_N: 
\exists k \; \sigma^{n+k}(x)=\sigma^k(y)\}\rightrightarrows  \Omega_N,
\end{equation}
with domain map $d_{\mathcal{G}}:\mathcal{G}\to \Omega_N$, range map $r_{\mathcal{G}}:\mathcal{G}\to \Omega_N$ and product $\cdot$ defined by
$$d_\mathcal{G}(x,n,y):=y,\quad r_\mathcal{G}(x,n,y):=x,\quad (x,n,y)\cdot(y,m,z)=(x,n+m,z).$$ 
The space $\mathcal{G}_N$ admits an extended metric $\rho_\mathcal{G}$ defined below in Definition \ref{definingmetric}. The  \'{e}tale topology described in \cite{Ren2} coincides with the metric topology on $\mathcal{G}_{N}$ induced by $\rho_{\mathcal{G}}$ (see Section \ref{sec:2}, Proposition \ref{propertiesofrhog}).There is an isomorphism $O_N\cong C^*(\mathcal{G}_N)$ (see \cite{Ren1,Ren2}) and an expectation $\Phi:C^*(\mathcal{G}_N)\to C(\Omega_N)$ induced by the clopen inclusion \begin{equation}
\label{unitincl}
\Omega_{N}\subset\mathcal{G}_{N},\quad x\mapsto (x,0,x).
\end{equation}
The algebra $O_N$ admits a unique KMS-state $\phi$ (see Section \ref{subsec:2.2}), and we write $L^{2}(O_{N}):=L^{2}(O_{N},\phi)$ for its GNS-representation  (see below in Subsection \ref{subsec:2.2}). Under the isomorphism $O_N\cong C^*(\mathcal{G}_N)$ we have  $L^2(O_N)=L^2(\mathcal{G}_N,m_\mathcal{G})$  for the measure $m_{\mathcal{G}}:=d^*_\mathcal{G}m_\Omega$ induced by the Patterson-Sullivan measure $m_\Omega$ on $\Omega_N$, characterized by $m_{\Omega}(C_\mu):=N^{-|\mu|}$. We often write $g=(x,n,y)$ for an element of $\mathcal{G}_N$. Note that the Hausdorff dimension of $\Omega_{N}$, and hence of $\mathcal{G}_{N}$, equals $\log N$.

\begin{definition}
\label{deftones}
We define the densely defined operators $c$, $T$ and $P_\mathcal{F}$ on $L^2(O_N)$ as follows. 
\begin{enumerate}
\item Define $c_0$ by $\mathrm{Dom}(c_0)=C_c(\mathcal{G}_N)$ and $c_0f(x,n,y):=nf(x,n,y)$ and let $c$ denote the closure of $c_0$.
\item Define $T_0$ by letting $\mathrm{Dom}(T_0)$ be the compactly supported locally constant functions and 
$$T_0f(g):=\frac{1}{(1-N^{-1})} \int_{\mathcal{G}_N} \frac{f(g)-f(h)}{\rho_\mathcal{G}(g,h)^{\log(N)}}\mathrm{d}m_{\mathcal{G}}(h),$$
and let $T$ denote the closure of $T_0$. The extended metric $\rho_\mathcal{G}$ is defined below in Definition \ref{definingmetric}.
\item  Define the set 
\begin{equation}
\label{thesetxf}
X_{\mathcal{F}}:=\left\{(x,n,y)\in \Omega_N\times \mathbb{N}\times \Omega_N: \exists \mu\in \{1,\ldots, N\}^n \mbox{ s.t. } x\in C_\mu \mbox{  and  } \sigma^n(x)=y\right\}.
\end{equation}
Let $P_\mathcal{F}$ denote the integral operator on $L^2(\mathcal{G}_N)$ with integral kernel $\chi_{X_\mathcal{F}}$ (the characteristic function of $X_\mathcal{F}$).
\end{enumerate}
\end{definition}

There is an isomorphism $K^1(O_N)\cong \mathbb{Z}/(N-1)\mathbb{Z}$ defined from Poincar\'e duality for Cuntz-Krieger algebras \cite{kaminkerputnam} and the isomorphism $K_0(O_N)\cong \mathbb{Z}/(N-1)\mathbb{Z}$. We denote by $\widehat{[1]}\in K^{1}(O_{N})$ the class Poincar\'e dual to $[1]\in K_0(O_{N})$ and sometimes refer to this class as \emph{the generator} of $K^1(O_N)$. The generator of $K^1(O_N)$ is represented by the extension considered in \cite{evansonon}. In the sequel we will use the operator $T$ from Definition \ref{deftones} to construct spectral triples representing the $K$-homology class $\widehat{[1]}$. 

In the statement of our main result we will make use of the so called dispersion operator $B:L^2(O_N)\to L^2(O_N)$ which is a bounded operator defined below in Lemma \ref{displem} (see page \pageref{displem}). The dispersion operator measures how non-diagonal the operator $T$ is in a particular ON-basis of $L^2(O_N)$. We also make use of a certain projection $Q$ defined just before Theorem \ref{bigtcomp}.

\begin{theorem}
\label{mainthm}
The operators $c$, $T$ and $P_\mathcal{F}$ from Definition \ref{deftones} are well defined self-adjoint operators. In fact $P_\mathcal{F}$ is an orthogonal projection, $T$ is positive and  $D:=(2P_\mathcal{F}-1)|c|-T$ is a self-adjoint operator with compact resolvent. Moreover, 
\begin{enumerate}
\item $(O_N,L^2(O_N),D)$ is a spectral triple whose class coincides with $\widehat{[1]}\in K^1(O_N)$ and $\mathrm{e}^{-tD^2}$ is of trace class for all $t>0$, i.e. $D$ is $\theta$-summable.
\item Up to finite rank operators, $P_\mathcal{F}=\chi_{[0,\infty)}(D)$ and for any $p>0$, 
$$(O_N,L^2(O_N),2P_{\mathcal{F}}-1)$$ 
is a $p$-summable Fredholm module whose class is $\widehat{[1]}\in K^1(O_N)$.
\item The operator $\tilde{D}:=D-B+(N-1)^{-1}Q$ also defines a spectral triple on $O_N$, where $B$ is the dispersion operator (see Lemma \ref{displem}) and $Q$ is a projection (see before Theorem \ref{bigtcomp}). For any extended limit $\omega\in L^\infty[0,1]^*$ at $0$ there is a probability measure $\tilde{m}_\omega$ on $\Omega_N$ such that 
$$\tilde{\phi}_\omega(a):=\omega\left( \frac{\mathrm{Tr}(a\mathrm{e}^{-t\tilde{D}^2})}{\mathrm{Tr}(\mathrm{e}^{-t\tilde{D}^2})}\right), \quad a\in O_N,$$
is computed from $\tilde{\phi}_\omega(a)=\int_{\Omega_N} \Phi(a)\mathrm{d} \tilde{m}_\omega$.
\end{enumerate}
\end{theorem}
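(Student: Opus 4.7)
The proof naturally splits along the three enumerated assertions, preceded by verification of the basic self-adjointness claims. The operator $c$ is self-adjoint as the (unitary) generator of the gauge action on $L^2(O_N)$. The quadratic form associated to $T$,
$$\langle f, T f\rangle \;=\; \tfrac{1}{1-N^{-1}}\iint \frac{|f(g)-f(h)|^{2}}{\rho_\mathcal{G}(g,h)^{\log N}}\,\mathrm{d}m_\mathcal{G}(g)\,\mathrm{d}m_\mathcal{G}(h),$$
is a non-local Dirichlet form on the Ahlfors-regular metric measure space $(\mathcal{G}_N,\rho_\mathcal{G},m_\mathcal{G})$ of Hausdorff dimension $\log N$, and the exponent $\log N$ is exactly critical: this gives positivity and essential self-adjointness on compactly supported locally constant functions. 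For $P_\mathcal{F}$, the set $X_\mathcal{F}$ is closed under groupoid inversion and its convolution square equals $X_\mathcal{F}$ up to a $m_\mathcal{G}$-null set, which forces $P_\mathcal{F}^{*}=P_\mathcal{F}=P_\mathcal{F}^{2}$.

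For (1), I would work in an orthonormal basis of $L^{2}(\mathcal{G}_N)$ indexed by pairs of finite words, simultaneously diagonalizing $c$ and $P_\mathcal{F}$. A direct computation using Ahlfors-regularity shows that $T$ is diagonal on this basis up to the bounded dispersion term $B$, with diagonal eigenvalues of order $k\log N$ on basis vectors of cylinder depth $k$. Combined with the $\pm n$ spectrum of $(2P_\mathcal{F}-1)|c|$, the operator $D$ is self-adjoint with compact resolvent; since depth-$k$ basis vectors have multiplicity $\sim N^{k}$, one obtains $\mathrm{Tr}(\mathrm{e}^{-tD^{2}})\lesssim \sum_{k}N^{k}\mathrm{e}^{-t(k\log N)^{2}}<\infty$ for every $t>0$, hence $\theta$-summability. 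Density of $\mathrm{Lip}_D(O_N)$ reduces to bounding $[D,S_{j}]$ for the generating isometries: $[|c|,S_{j}]=S_{j}$ by the gauge relation, $[P_\mathcal{F},S_{j}]$ is finite rank because $X_\mathcal{F}$ only couples adjacent cylinder levels, and $[T,S_{j}]$ is bounded by a Calder\'on-commutator estimate on the Ahlfors-regular kernel. The identification of $[D]\in K^{1}(O_N)$ with $\widehat{[1]}$ follows from (2).

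For (2), on spectral subspaces where $|c|$ dominates $T$ the sign of $D$ coincides with $2P_\mathcal{F}-1$, so only finitely many basis vectors can contribute to $\chi_{[0,\infty)}(D)-P_\mathcal{F}$; this proves the finite rank identity. Since $[P_\mathcal{F},S_{j}]$ is already finite rank, $(O_N,L^{2}(O_N),2P_\mathcal{F}-1)$ is $p$-summable for every $p>0$, and identifying $P_\mathcal{F}$ with the Toeplitz projection in the defining Evans extension \cite{evansonon} pins its class at $\widehat{[1]}$. For (3), by the construction of $B$ in Lemma \ref{displem} the operator $T-B$ is diagonal in the cylinder basis while $Q$ corrects a residual rank defect, so $\tilde{D}$ is diagonal. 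The heat trace $\mathrm{Tr}(a\mathrm{e}^{-t\tilde{D}^{2}})$ factors through the conditional expectation $\Phi$: only the diagonal matrix coefficients of $a$ survive, and these coefficients are values of $\Phi(a)$ at cylinder centres. Normalising by $\mathrm{Tr}(\mathrm{e}^{-t\tilde{D}^{2}})$ produces a family of probability measures on $\Omega_N$ concentrated on cylinders, and extracting the extended limit $\omega$ at $t=0$ gives the measure $\tilde{m}_\omega$ and the claimed formula.

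The principal obstacle throughout is the spectral analysis of the singular operator $T$: proving that in the cylinder basis it is diagonal plus precisely the bounded operator $B$, and quantifying its diagonal growth, amounts to very careful evaluations of $\int_{\mathcal{G}_N}\rho_\mathcal{G}(g,h)^{-\log N}\,\mathrm{d}m_\mathcal{G}(h)$ over cylinder sets and of the corresponding off-diagonal integrals. Once this is achieved, compact resolvent, $\theta$-summability, the finite-rank identification of $\chi_{[0,\infty)}(D)$ with $P_\mathcal{F}$, and the diagonalisation of $\tilde{D}$ all follow; the $K$-homological identification in (2) and the heat-trace computation in (3) are then comparatively routine consequences.
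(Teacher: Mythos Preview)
Your overall architecture matches the paper's: the core step is the explicit spectral computation of $T$ in the basis $\mathrm{e}_{\mu,\nu}$ (Theorem~\ref{bigtcomp}), which yields $T=\kappa-\tfrac{1}{N}\kappa_\mathcal{G}+(N-1)^{-1}Q-B$ and from which compact resolvent, $\theta$-summability, the phase identification, and the diagonality of $\tilde{D}$ all follow. Your final paragraph correctly identifies this as the principal obstacle and describes it accurately.

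However, several of the shortcuts you take in the body are either different from the paper or do not work as stated. First, the paper does not prove boundedness of $[T,S_j]$ via a Calder\'on-type commutator estimate on an Ahlfors-regular kernel. Such estimates control commutators of singular integrals with \emph{pointwise multiplication} by Lipschitz functions, but $S_j$ acts on $L^2(\mathcal{G}_N)$ by groupoid convolution, not multiplication; there is no off-the-shelf Calder\'on lemma for that. The paper instead uses the exact decomposition of $T$ to reduce to bounded commutators of $\kappa$ and $\kappa_\mathcal{G}$ separately with $S_j$, each of which is supplied by \cite[Theorem 3.19]{GMR}. Second, your claim that $[P_\mathcal{F},S_j]$ is finite rank is not correct and is not what the paper proves: it establishes only that $[P_\mathcal{F},a]\in\mathcal{L}^p$ for all $p>0$ and $a$ in a dense subalgebra, citing \cite[Proposition 2.2.5]{GM}. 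That weaker statement is what is actually needed for $p$-summability. Third, $Q$ is not a finite-rank ``residual defect'' correction: it is the orthogonal projection onto the infinite-dimensional span of those $\mathrm{e}_{\mu,\nu}$ with $t(\mu)=t(\nu)\neq\emptyset$, and the term $(N-1)^{-1}Q$ is a genuine constituent of the diagonal part of $T$. Finally, the diagonal eigenvalues of $T$ grow like $|\nu|$, not $|\nu|\log N$; the $\log N$ enters only as the Hausdorff dimension in the kernel exponent, not in the spectrum.

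Your Dirichlet-form argument for positivity and essential self-adjointness of $T$ is a reasonable alternative viewpoint, but the paper bypasses it entirely: once $T_0$ is shown on $C_c^\infty(\mathcal{G}_N)$ to equal a diagonal operator plus the bounded $B$, self-adjointness and positivity are read off directly.
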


\begin{remark}
The importance of part 3 of Theorem \ref{mainthm} is in the context of the states constructed from $\theta$-summable spectral triples in \cite{froehlicetal}. The assumption \cite[Assumption 5.4]{froehlicetal} requires the associated states to be tracial. This condition clearly fails in the purely infinite case.
\end{remark}

A key ingredient in the proof of the theorem is the notion of the depth-kore operator from \cite{GMR}. The depth-kore operator $\kappa$ is a self-adjoint operator on $L^2(O_N)$ which together with $c$ facilitates a decomposition $L^2(O_N)=\bigoplus_{n,k}\mathcal{H}_{n,k}$ into finite-dimensional subspaces with an explicit ON-basis. As we will see below in Proposition \ref{prop:Fock} of Section \ref{subsec:2.2}, $P_\mathcal{F}$ is the orthogonal projection onto the free Fock space $\mathcal{F}:= \bigoplus_{n=0}^\infty\mathcal{H}_{n,0}\cong \ell^2(\mathcal{V}_N)$ where $\mathcal{V}_N=\cup_{k=0}^\infty \{1,\ldots, N\}^k$.

The structure of the paper is as follows. In Section \ref{sec:2} we describe the geometry of the Cuntz-Renault groupoid $\mathcal{G}_{N}$ and the GNS representation of the KMS state of the Cuntz algebra in terms of the Cuntz-Renault groupoid. We compare the $\kappa$-function on $\mathcal{G}_{N}$ (cf. \cite[Section 5]{GM}) to the $\kappa$-operator on its $L^2$-space (cf. \cite[Lemma 2.13]{GMR}) in Section \ref{sec:3}. The integral operator $T$ is computed in Section \ref{sec:4} and we assemble these ingredients to spectral triples in Section \ref{sec:5}. The proof of Theorem \ref{mainthm} is found in Section \ref{sec:5} and \ref{subsec:5.1}. 

\section{Metric measure theory on $O_N$}
\label{sec:2}
In this section we will set the scene for the paper and describe the relevant objects. Most of this material reviews previously published results. The context we present, which to our knowledge is novel, sheds a new light on them.

\subsection{The groupoid $\mathcal{G}_N$ as a metric measure space}

\label{subsec:2.1}

The groupoid $\mathcal{G}_N$ was defined as a set with algebraic structure in Equation \eqref{defininggn} and we now describe its topology in more detail. Define the functions $\kappa_\mathcal{G}:\mathcal{G}_N\to \mathbb{N}$ and $c:\mathcal{G}_N\to \mathbb{Z}$ by
$$\kappa_\mathcal{G}:(x,n,y)\mapsto \min\{k\geq \max\{0,-n\}: \sigma^{n+k}(x)=\sigma^k(y)\},\quad c: (x,n,y)\mapsto n.$$
For $g\in\mathcal{G}_{N}$ and composable $g_{1},g_{2}\in\mathcal{G}_{N}$ it holds that 
$$c(g_{1}\cdot g_{2})=c(g_{1})+c(g_{2}),\quad \kappa_{\mathcal{G}}(g_{1}\cdot g_{2})\leq \kappa(g_{1})+\kappa(g_{2}),\quad c(g)+\kappa_\mathcal{G}(g)\geq 0.$$
In summary, $c$ is a cocycle, $\kappa_{\mathcal{G}}$ is submultiplicative and their sum is a positive function. 

We equip $\mathcal{G}_N$ with the smallest topology making $c$, $\kappa_\mathcal{G}$, $r_\mathcal{G}$ and $d_\mathcal{G}$ continuous. It is readily verified that a basis for the topology on $\mathcal{G}_N$ is given by the sets
$$X_{\mu,\nu}:=\{(x,|\mu|-|\nu|,y)\in \mathcal{G}_N: x\in C_\mu, \; y\in C_\nu, \; \sigma^{|\mu|}(x)=\sigma^{|\nu|}(y)\}, \quad\mbox{for  } \mu,\nu\in \mathcal{V}_N.$$
The groupoid $\mathcal{G}_N$ is \'{e}tale in this topology. An \'{e}tale groupoid over a totally disconnected space is again totally disconnected, so the space of compactly supported locally constant functions is dense in $C_c(\mathcal{G}_N)$. For $\mu,\nu \in \mathcal{V}_N$ we use the notation $S_\mu:=S_{\mu_1}\cdots S_{\mu_{|\mu|}}$, $S_\mu^*:=(S_\mu)^*$ and $\chi_{\mu,\nu}$ for the characteristic function of $X_{\mu,\nu}$. The following result is proven in \cite{Ren1, Ren2}.

\begin{theorem}
\label{theisoof}
The $C^*$-algebras $O_N$ and $C^*(\mathcal{G}_N)$ are isomorphic via a $*$-homomorphism $O_N\to C^*(\mathcal{G}_N)$ that maps $S_\mu S_\nu^*$ to the compactly supported locally constant function $\chi_{\mu,\nu}\in C_c(\mathcal{G}_N)$.
\end{theorem}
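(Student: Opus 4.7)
The plan is to construct the $*$-homomorphism $\pi \colon O_N \to C^*(\mathcal{G}_N)$ by invoking the universal property of the Cuntz algebra. The first step is to verify that the functions $s_i := \chi_{i, \emptyset} \in C_c(\mathcal{G}_N)$, for $i=1,\ldots,N$, satisfy the Cuntz relations $s_i^* s_j = \delta_{ij}\, \chi_{\emptyset,\emptyset}$ and $\sum_{i=1}^N s_i s_i^* = \chi_{\emptyset, \emptyset}$. Using the convolution product and the adjoint $f^*(g) = \overline{f(g^{-1})}$ on $C_c(\mathcal{G}_N)$, one checks that $\chi_{\mu,\nu}^* = \chi_{\nu,\mu}$; a direct computation with composable pairs in $X_{i,\emptyset} \times X_{\emptyset, j}$ then shows that $s_i^* s_j$ vanishes for $i\ne j$ and equals $\chi_{\emptyset, \emptyset}$ when $i = j$. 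The completeness relation $\sum_i \chi_{i, i} = \chi_{\emptyset, \emptyset}$ follows from the clopen partition $\Omega_N = \bigsqcup_{i=1}^N C_i$, together with the identification of $\chi_{\emptyset,\emptyset}$ with the unit of $C^*(\mathcal{G}_N)$ coming from the inclusion \eqref{unitincl}.

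Having verified these relations, the universal property of $O_N$ produces a unital $*$-homomorphism $\pi$ with $\pi(S_i) = \chi_{i,\emptyset}$. The identity $\pi(S_\mu S_\nu^*) = \chi_{\mu,\nu}$ for all $\mu,\nu \in \mathcal{V}_N$ then follows by induction on the total length $|\mu|+|\nu|$, iterating the same convolution computation; the cocycle value $|\mu|-|\nu|$ coming out of the computation matches the second coordinate in the definition of $X_{\mu,\nu}$ from \eqref{defininggn}.

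For surjectivity, the sets $X_{\mu,\nu}$ form a basis for the \'{e}tale topology on $\mathcal{G}_N$, so on the totally disconnected space $\mathcal{G}_N$ every compactly supported locally constant function is a finite linear combination of the $\chi_{\mu,\nu}$. Since such functions are dense in $C_c(\mathcal{G}_N)$ and $C_c(\mathcal{G}_N)$ is norm-dense in $C^*(\mathcal{G}_N)$, the image of $\pi$ is dense, and being the image of a $*$-homomorphism between $C^*$-algebras it is closed, hence all of $C^*(\mathcal{G}_N)$. For injectivity, $\pi$ is non-zero since $\pi(S_1) = \chi_{1,\emptyset} \ne 0$, and $O_N$ is simple by Cuntz's classical result, so $\pi$ must be injective.

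The main obstacle is the convolution bookkeeping: the product $\chi_{\mu,\nu} * \chi_{\mu',\nu'}$ in $C^*(\mathcal{G}_N)$ is non-zero only when $\mu'$ and $\nu$ are comparable as prefixes, and the resulting characteristic function must be re-indexed accordingly (matching the identity $S_\mu S_\nu^* S_{\mu'} S_{\nu'}^*$ in $O_N$). Once the basic case $\chi_{i,\emptyset} * \chi_{\emptyset,j} = \chi_{i,j}$ is in hand, the inductive computation of $\pi(S_\mu S_\nu^*)$ and the surjectivity argument are essentially mechanical, and the non-trivial analytic input—the simplicity of $O_N$—is used only at the final injectivity step.
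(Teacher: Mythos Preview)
Your argument is correct and is exactly the standard route to this isomorphism. Note, however, that the paper does not give its own proof of this theorem at all: it is stated with the preamble ``The following result is proven in \cite{Ren1, Ren2}'' and no argument is supplied. So there is nothing to compare your approach against in the paper itself; you have simply filled in what the authors chose to outsource to Renault.

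One small remark on your surjectivity step: to conclude that every compactly supported locally constant function is a finite linear combination of the $\chi_{\mu,\nu}$, you are implicitly using that the basic sets $X_{\mu,\nu}$ are themselves compact and open (so that any compact open set is a finite disjoint union of them). This is true---$d_{\mathcal{G}}$ restricts to a homeomorphism $X_{\mu,\nu}\to C_\nu$ onto a compact set---but it is worth making explicit, since a basis for the topology need not in general consist of compact sets.
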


\begin{notn}
For $g=(x,n,y)\in \mathcal{G}_N$ we have $\sigma^{n+\kappa_\mathcal{G}(g)}(x)=\sigma^{\kappa_\mathcal{G}(g)}(y)$. We will use the notation $z(g):= \sigma^{\kappa_\mathcal{G}(g)}(y)$, $\mu_\mathcal{G}(g)$ will denote the word of length $n+\kappa_\mathcal{G}(g)$ such that $x=\mu_\mathcal{G}(g)z(g)$ and  $\nu_\mathcal{G}(g)$ will denote the word of length $\kappa_\mathcal{G}(g)$ such that $y=\nu_\mathcal{G}(g)z(g)$. In particular, we have
$$g=(\mu_\mathcal{G}(g)z(g),c(g),\nu_\mathcal{G}(g)z(g)), \quad\forall g\in \mathcal{G}_N.$$
Clearly, $z:\mathcal{G}_N\to \Omega_N$ and $\mu_\mathcal{G},\nu_\mathcal{G}:\mathcal{G}_N\to \mathcal{V}_N$ are continuous. When there is no risk of confusion with fixed finite words, we write simply $\mu(g)$ and $\nu(g)$. We also write $y(g):=y$.
\end{notn}

The compact space $\Omega_N$ is metrized by the metric $\rho_\Omega$ defined by 
$$\rho_\Omega(x_1x_2\cdots ,y_1y_2\cdots ):=\inf\{\mathrm{e}^{-l}: x_1x_2\cdots x_l= y_1y_2\cdots y_l\},$$
with the convention that $\rho_\Omega(ix_2\cdots ,jy_2\cdots )=1$ if $i\neq j$.

\begin{definition}
\label{definingmetric}
We define $\rho_\mathcal{G}:\mathcal{G}_N\times \mathcal{G}_N\to [0,\infty]$ by 
$$\rho_\mathcal{G}(g_1,g_2):=
\begin{cases}
\infty, \;&\mbox{  if $\kappa_\mathcal{G}(g_1)\neq \kappa_\mathcal{G}(g_2)$ or $\mu(g_1)\neq \mu(g_2)$,}\\
\rho_\Omega(y(g_1),y(g_2)), \;&\mbox{  if $\kappa_\mathcal{G}(g_1)= \kappa_\mathcal{G}(g_2)$ and $\mu(g_1)= \mu(g_2)$.}
\end{cases}$$
\end{definition}

For $\mu\in \mathcal{V}_N$ and $k\in \mathbb{N}$, we define the set 
\begin{equation}
\label{thesetcmuk}
\mathcal{C}_{\mu,k}:=\{g\in\mathcal{G}_N:\mu_\mathcal{G}(g)=\mu, \kappa_\mathcal{G}(g)=k\}.
\end{equation}
The set $\mathcal{C}_{\mu,k}$ is homeomorphic to a clopen subset of $\Omega_N$ via the domain mapping $d_\mathcal{G}$. We can clearly partition 
$$\mathcal{G}_N=\dot{\cup}_{\mu, k} \mathcal{C}_{\mu,k}.$$ 
Moreover, for a fixed $g_1\in \mathcal{G}_N$ we have
$$\{g_2\in \mathcal{G}_N:\rho_\mathcal{G}(g_1,g_2)<\infty\}=C_{\mu(g_1),\kappa(g_1)}.$$

\begin{proposition}
\label{propertiesofrhog}
The function $\rho_\mathcal{G}$ is an extended metric on $\mathcal{G}_N$ and the topology induced by $\rho_{\mathcal{G}}$ coincides with the \'{e}tale topology on $\mathcal{G}_{N}$. The functions $c$, $\kappa_\mathcal{G}$, $r_\mathcal{G}$ and $d_\mathcal{G}$ as well as any compactly supported locally constant function are uniformly Lipschitz continuous with respect to $\rho_{\mathcal{G}}$.  
\end{proposition}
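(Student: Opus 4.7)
The plan is to verify in sequence (i) the extended-metric axioms for $\rho_\mathcal{G}$, (ii) the equivalence of the $\rho_\mathcal{G}$-topology with the étale topology, and (iii) the stated Lipschitz estimates.

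For (i), symmetry and $\rho_\mathcal{G}(g,g)=0$ are immediate from the definition. If $\rho_\mathcal{G}(g_1,g_2)=0$ then we are in the finite branch, so $\mu_\mathcal{G}(g_i)$, $\kappa_\mathcal{G}(g_i)$ and $y(g_i)$ all agree; combined with the identities $c(g)=|\mu_\mathcal{G}(g)|-\kappa_\mathcal{G}(g)$ and $x(g)=\mu_\mathcal{G}(g)\sigma^{\kappa_\mathcal{G}(g)}(y(g))$, this forces $g_1=g_2$. The triangle inequality is trivial whenever the right-hand side is infinite; otherwise both summands force $\mu_\mathcal{G}$ and $\kappa_\mathcal{G}$ to agree on all three elements, reducing the inequality to the triangle inequality for $\rho_\Omega$.

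For (ii), I would establish a mutual refinement of the two bases. On one hand, a $\rho_\mathcal{G}$-ball admits the description
\begin{equation*}
B_r(g_1)=\mathcal{C}_{\mu_\mathcal{G}(g_1),\kappa_\mathcal{G}(g_1)}\cap d_\mathcal{G}^{-1}\bigl(B_{\rho_\Omega}(y(g_1),r)\bigr),
\end{equation*}
which is étale-open since $\mu_\mathcal{G}$ and $\kappa_\mathcal{G}$ are étale-continuous into discrete codomains (making $\mathcal{C}_{\mu,k}$ clopen) and $d_\mathcal{G}$ is étale-continuous. Conversely, for $g_1\in X_{\mu,\nu}$, choosing $r\leq e^{-|\nu|}$ forces every $g\in B_r(g_1)$ to satisfy $c(g)=|\mu|-|\nu|$, $y(g)\in C_\nu$ and $x(g)\in C_\mu$, by a short computation that exploits the factorization $x(g)=\mu_\mathcal{G}(g)\sigma^{\kappa_\mathcal{G}(g)}(y(g))$ and the fact that $g_1\in X_{\mu,\nu}$ already fixes the shared tail $z(g_1)$.

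For (iii), $c$ and $\kappa_\mathcal{G}$ are constant on each $\mathcal{C}_{\mu,k}$ and distinct components are at infinite distance, so they are $0$-Lipschitz. The domain map $d_\mathcal{G}$ is tautologically $1$-Lipschitz by the definition of $\rho_\mathcal{G}$. For the range map, the identity $r_\mathcal{G}(g)=\mu_\mathcal{G}(g)\sigma^{\kappa_\mathcal{G}(g)}(y(g))$ together with the ultra-metric behaviour of $\rho_\Omega$ under shifts and prefix concatenation gives the estimate $\rho_\Omega(r_\mathcal{G}(g_1),r_\mathcal{G}(g_2))\leq e^{k-|\mu|}\rho_\mathcal{G}(g_1,g_2)$ on each $\mathcal{C}_{\mu,k}$. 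For a compactly supported locally constant $f$ the support intersects only finitely many basic sets $X_{\mu,\nu}$; on each such set $f$ is constant, and by (ii) a sufficiently small $\rho_\mathcal{G}$-ball fits inside any prescribed $X_{\mu,\nu}$, so $f$ is locally constant in the $\rho_\mathcal{G}$-sense, yielding the Lipschitz estimate.

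The step I expect to be most delicate is the uniform Lipschitz claim for $r_\mathcal{G}$: the component-wise constant $e^{k-|\mu|}$ is not uniformly bounded over the decomposition $\mathcal{G}_N=\bigsqcup_{\mu,k}\mathcal{C}_{\mu,k}$. The resolution must exploit the extended-metric convention — between different components the Lipschitz inequality holds trivially because $\rho_\mathcal{G}=\infty$ — so ``uniformly Lipschitz'' is to be read as a uniform bound on each finite-distance equivalence class. A parallel remark applies to compactly supported locally constant functions, where compactness of the support limits the number of relevant components.
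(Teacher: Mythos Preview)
Your proposal is correct and follows the same line as the paper's proof, which is much terser: after verifying the metric axioms the paper simply declares the remaining claims to be ``direct consequences of the construction of the extended metric.'' Your worry about $r_\mathcal{G}$ is justified---on $\mathcal{C}_{\emptyset,k}$ the Lipschitz constant for $r_\mathcal{G}$ is $e^{k}$, so no single global constant exists---and the paper tacitly concedes this in the remark immediately following the proposition, where $r_\mathcal{G}$ and $d_\mathcal{G}$ are described only as \emph{locally} bi-Lipschitz homeomorphisms; your reading of ``uniformly Lipschitz'' as Lipschitz on each finite-distance class is therefore the charitable interpretation of the statement. For compactly supported locally constant functions, by contrast, your compactness argument does yield a genuine global Lipschitz constant (the support meets only finitely many $\mathcal{C}_{\mu,k}$), so no reinterpretation is needed there.
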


\begin{proof}
We start by giving the argument for why $\rho_\mathcal{G}$ is an extended metric. If $\rho_\mathcal{G}(g_1,g_2)=0$ then $y(g_1)=y(g_2)$, $\kappa_\mathcal{G}(g_1)= \kappa_\mathcal{G}(g_2)$ and $\mu_\mathcal{G}(g_1)= \mu_\mathcal{G}(g_2)$ so $c(g_1)=c(g_2)$ and we conclude that $g_1=g_2$. The function $\rho_\mathcal{G}$ is clearly non-negative and symmetric. The triangle inequality follows from the fact that given $\mu\in \mathcal{V}_N$ and $k\in \mathbb{N}$, the set $\mathcal{C}_{\mu,k}$ is bi-Lipschitz homeomorphic to a clopen subset of $\Omega_N$ via the domain mapping $d_\mathcal{G}$. The remainder of the proposition are direct consequences of the construction of the extended metric.
\end{proof}

\begin{remark}
We note that $r_\mathcal{G}$ and $d_\mathcal{G}$ are locally bi-Lipschitz homeomorphisms between $\mathcal{G}_N$ and $\Omega_N$ so any local metric invariant, e.g. Hausdorff dimension, remains the same for the two spaces. 
\end{remark}

It is often fruitful to think of $\Omega_N$ as the Gromov boundary of the discrete hyperbolic space $\mathcal{V}_N$. Here we think of $\mathcal{V}_N$ as a rooted tree, with root $\emptyset\in \{1,\ldots, N\}^0=\{\emptyset\}$ and given a directed graph structure by declaring an edge from $\mu$ to $\mu j$ for any $\mu \in \mathcal{V}_N$ and $j\in \{1,\ldots,N\}$. We write $\overline{\mathcal{V}}_N:=\mathcal{V}_N\cup \Omega_N$ for the corresponding compactification of $\mathcal{V}_N$; we topologize $\overline{\mathcal{V}}_N$ in such a way that $\mathcal{V}_N\subseteq \overline{\mathcal{V}}_N$ is a discrete subspace and for any $\mu \in \mathcal{V}_N$, the set $\{\nu\in \mathcal{V}_N: \nu=\mu\nu_0$ for some $\nu_0\in \mathcal{V}_N\}\cup C_\mu$ is open. Let $\delta_{\mu}$ denote the Dirac measure at $\mu\in\mathcal{V}_{N}$ and for $s>\log(N)$ define probability measures on $\overline{\mathcal{V}}_{N}$ via
$$m_s:=\frac{\sum_{\mu\in \mathcal{V}_N}\mathrm{e}^{-s|\mu|}\delta_\mu}{\sum_{\mu\in \mathcal{V}_N}\mathrm{e}^{-s|\mu|}}.$$
The measures $m_s$ are supported in $\mathcal{V}_N$. The following construction of the Patterson-Sullivan measures on $\Omega_N$ is well-known, see for instance \cite{Coornaert}.

\begin{proposition}
The net of measures $(m_s)_{s>\log(N)}$ has a w$^*$-limit $m_\Omega$ as $s\to \log(N)$. The measure $m_{\Omega}$ is supported on $\Omega_{N}\subset\overline{\mathcal{V}}_{N}$ and coincides with $\log (N)$-dimensional Hausdorff measure. It satisfies $m_\Omega(C_\nu)=N^{-|\nu|}$ for any $\nu\in \mathcal{V}_N$.  
\end{proposition}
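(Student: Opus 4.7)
I would first reduce to the geometric series $Z(s) := \sum_{\mu \in \mathcal{V}_N} \mathrm{e}^{-s|\mu|} = \sum_{k \geq 0} N^k \mathrm{e}^{-sk} = (1 - N\mathrm{e}^{-s})^{-1}$, noting $Z(s) \to \infty$ as $s \downarrow \log N$. The key computation is that the clopen set $U_\mu := \{\mu\nu_0 : \nu_0 \in \mathcal{V}_N\} \cup C_\mu$ has sum that factors neatly:
\[
m_s(U_\mu) = \frac{1}{Z(s)} \sum_{\nu_0 \in \mathcal{V}_N} \mathrm{e}^{-s(|\mu|+|\nu_0|)} = \mathrm{e}^{-s|\mu|} \xrightarrow[s \downarrow \log N]{} N^{-|\mu|},
\]
while each singleton $\{\nu\}$ is also clopen and satisfies $m_s(\{\nu\}) = \mathrm{e}^{-s|\nu|}/Z(s) \to 0$ because the normalization blows up.

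Next I would establish weak-$*$ convergence by first verifying that $\overline{\mathcal{V}}_N$ is compact Hausdorff: given any open cover, one extracts finitely many $U_{\tau_j}$ covering $\Omega_N$ (using compactness of $\Omega_N$), and the remaining uncovered vertices must be prefixes of the $\tau_j$'s, hence form a finite set handled by finitely many more cover elements. By Banach--Alaoglu the net $(m_s)$ has a weak-$*$ cluster point $m_\Omega$, itself a probability measure on $\overline{\mathcal{V}}_N$. Continuity of $\chi_{U_\mu}$ and $\chi_{\{\nu\}}$ forces $m_\Omega(U_\mu) = N^{-|\mu|}$ and $m_\Omega(\{\nu\}) = 0$; since $\mathcal{V}_N$ is countable, $m_\Omega(\mathcal{V}_N) = 0$, so $m_\Omega$ concentrates on $\Omega_N$ with $m_\Omega(C_\mu) = m_\Omega(U_\mu) = N^{-|\mu|}$. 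As cylinders form a $\pi$-system generating the Borel $\sigma$-algebra of $\Omega_N$, Dynkin's theorem shows the cluster point is unique, so the full net converges.

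To identify $m_\Omega$ with the $\log N$-Hausdorff measure I would verify that both assign $N^{-|\mu|}$ to $C_\mu$. Any non-singleton $A \subseteq \Omega_N$ has diameter $\mathrm{e}^{-\ell(A)}$ where $\ell(A)$ is the longest common prefix length of elements of $A$, so $A$ lies inside a cylinder $C_\nu$ with $|\nu| = \ell(A)$ and $(\mathrm{diam}\,A)^{\log N} = N^{-|\nu|} = m_\Omega(C_\nu)$. Any countable cover $\{A_i\}$ of $C_\mu$ thus yields
\[
\sum_i (\mathrm{diam}\,A_i)^{\log N} = \sum_i m_\Omega(C_{\nu_i}) \geq m_\Omega(C_\mu) = N^{-|\mu|}
\]
by countable subadditivity, which provides the lower bound; the partition $C_\mu = \sqcup_{|\nu|=k} C_{\mu\nu}$ yields the matching upper bound $N^k \cdot \mathrm{e}^{-(|\mu|+k)\log N} = N^{-|\mu|}$ at scale $\mathrm{e}^{-(|\mu|+k)}$. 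Agreement on the $\pi$-system of cylinders extends to agreement on all Borel sets. The principal obstacle is the simultaneous treatment of vanishing mass on the countable discrete part $\mathcal{V}_N$ and the accumulating boundary mass, resolved here by the hybrid clopen structure of $\overline{\mathcal{V}}_N$ which makes both $\chi_{U_\mu}$ and $\chi_{\{\nu\}}$ genuinely continuous.
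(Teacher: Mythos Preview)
Your proof is correct. The paper does not actually supply a proof of this proposition; it simply states the result as well-known and refers the reader to Coornaert's work on Patterson--Sullivan measures for Gromov-hyperbolic spaces. Your argument is a clean, self-contained verification in this specific tree setting: the exact identity $m_s(U_\mu)=\mathrm{e}^{-s|\mu|}$ obtained from the geometric series, together with the hybrid clopen basis of $\overline{\mathcal{V}}_N$ making $\chi_{U_\mu}$ and $\chi_{\{\nu\}}$ continuous, yields both weak-$*$ convergence and the cylinder values directly, while the Hausdorff identification follows because every non-singleton subset of $\Omega_N$ sits inside a cylinder of exactly the same diameter. This is more elementary and explicit than invoking the general Patterson--Sullivan machinery, at the cost of being tailored to the full $N$-shift rather than arbitrary hyperbolic boundaries.
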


\subsection{The representation associated with the KMS state on $O_N$}
\label{subsec:2.2}

We will now approach $O_N$ from an operator theoretic viewpoint. The cocycle $c$ gives rise to a $U(1)$-action on $O_N$ by $(z\cdot f)(g)=z^{c(g)}f(g)$ for $f\in C_c(\mathcal{G}_N)$. Under the isomorphism of Theorem \ref{theisoof}, this action is given on the generators of $O_N$ by $z\cdot S_i=zS_i$. The functional 
$$\phi(f):=\int_{\Omega_N} f(x,0,x)\mathrm{d}m_\Omega,$$
extends to a state on $O_N$. Indeed, $\phi(S_\mu S_\nu^*)=\delta_{\mu,\nu} N^{-|\nu|}$. The state $\phi$ is the unique KMS state on $O_N$ (equipped with the action defined above) and its inverse temperature is $\log(N)$, see \cite{OP}.

\begin{proposition}
We consider the measure $m_\mathcal{G}:=d_\mathcal{G}^* m_\Omega$ on $\mathcal{G}_N$. The isomorphism of Theorem \ref{theisoof} uniquely determines a unitary isomorphism $L^2(O_N,\phi)\to L^2(\mathcal{G}_N,m_\mathcal{G})$ compatible with the left $O_N$-action.
\end{proposition}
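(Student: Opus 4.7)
The plan is to perform the GNS construction directly on the dense $*$-subalgebra $C_c(\mathcal{G}_N)\subset C^*(\mathcal{G}_N)$, transport $\phi$ across the isomorphism of Theorem \ref{theisoof}, and identify the GNS inner product on $C_c(\mathcal{G}_N)$ with the $L^2$-inner product for $m_\mathcal{G}$. Concretely, under the isomorphism $O_N\cong C^*(\mathcal{G}_N)$ the state becomes $\phi(f)=\int_{\Omega_N}f(x,0,x)\,\mathrm{d}m_\Omega(x)$ for $f\in C_c(\mathcal{G}_N)$; this is well defined because the diagonal inclusion \eqref{unitincl} is a clopen embedding of $\Omega_N$ into $\mathcal{G}_N$, so the restriction $f\mapsto f|_{\Omega_N}$ is continuous into $C(\Omega_N)$.

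The first main step is to compute $\phi(f^**f)$ for $f\in C_c(\mathcal{G}_N)$ using the étale convolution. Since $\mathcal{G}_N$ is étale, for each $x\in\Omega_N$ the fiber $d_\mathcal{G}^{-1}(x)$ is countable, and
$$
(f^**f)(x,0,x)=\sum_{g:\,d_\mathcal{G}(g)=x}\overline{f(g)}\,f(g),
$$
which is a locally finite sum because $f$ has compact support. Integrating against $m_\Omega$ and interpreting the pushforward $m_\mathcal{G}=d_\mathcal{G}^*m_\Omega$ exactly as the measure whose integral is given by the "integral over $\Omega_N$ of the sum over $d_\mathcal{G}$-fibers" yields
$$
\phi(f^**f)=\int_{\Omega_N}\sum_{g:\,d_\mathcal{G}(g)=x}|f(g)|^2\,\mathrm{d}m_\Omega(x)=\|f\|^2_{L^2(\mathcal{G}_N,m_\mathcal{G})}.
$$
Here I would make the pushforward rigorous by using the partition $\mathcal{G}_N=\dot\cup_{\mu,k}\mathcal{C}_{\mu,k}$ from \eqref{thesetcmuk}, on each piece of which $d_\mathcal{G}$ is a homeomorphism onto a clopen subset of $\Omega_N$, so $m_\mathcal{G}$ is just the sum of the pullback measures. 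In particular $\phi$ is faithful on $C_c(\mathcal{G}_N)$, the left ideal $\{a:\phi(a^*a)=0\}$ is trivial on this subalgebra, and the identity map $C_c(\mathcal{G}_N)\to C_c(\mathcal{G}_N)\subset L^2(\mathcal{G}_N,m_\mathcal{G})$ is an isometry for the GNS norm.

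The second step is to extend this isometry to a unitary. Density of $C_c(\mathcal{G}_N)$ in $L^2(\mathcal{G}_N,m_\mathcal{G})$ is standard for Radon measures on locally compact Hausdorff spaces (which $\mathcal{G}_N$ is by Proposition \ref{propertiesofrhog}), and density in $L^2(O_N,\phi)$ holds because $C_c(\mathcal{G}_N)$ is norm dense in $C^*(\mathcal{G}_N)\cong O_N$ and norm convergence implies GNS $L^2$-convergence. Hence the isometry extends to a unitary $U:L^2(O_N,\phi)\to L^2(\mathcal{G}_N,m_\mathcal{G})$.

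The third step is compatibility with the left $O_N$-action. For $a\in C_c(\mathcal{G}_N)\subset O_N$ and $b\xi_\phi\in L^2(O_N,\phi)$ with $b\in C_c(\mathcal{G}_N)$, the GNS action sends $b\xi_\phi$ to $(a*b)\xi_\phi$, which under $U$ becomes the $L^2$-class of $a*b$, i.e. the convolution of $a$ against $b$ as an element of $L^2(\mathcal{G}_N,m_\mathcal{G})$. Since both actions are bounded $*$-representations and agree on the dense subalgebra $C_c(\mathcal{G}_N)$ applied to a dense subset of vectors, they agree globally. Uniqueness of $U$ follows because the GNS representation is cyclic with cyclic vector $\xi_\phi$, which $U$ must map to the constant function $1\in C(\Omega_N)\subset L^2(\mathcal{G}_N,m_\mathcal{G})$ (the characteristic function of the unit space), and this determines $U$ on the dense subspace $O_N\cdot\xi_\phi$.

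The main technical point to be careful about is the computation of $\phi(f^**f)$ and the precise interpretation of $m_\mathcal{G}=d_\mathcal{G}^*m_\Omega$; once one sees that the pieces $\mathcal{C}_{\mu,k}$ give a countable clopen partition on which $d_\mathcal{G}$ restricts to a homeomorphism, both the $L^2$-norm identity and the density of $C_c(\mathcal{G}_N)$ in $L^2(\mathcal{G}_N,m_\mathcal{G})$ become routine.
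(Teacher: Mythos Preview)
Your proof is correct and follows essentially the same route as the paper. The paper compresses the argument into a single sentence, noting that the cyclic vector $1\in O_N$ corresponds to $\chi_\Omega\in C_c(\mathcal{G}_N)$ and that $\phi(f)=\langle \chi_\Omega, f*\chi_\Omega\rangle_{L^2(\mathcal{G}_N,m_\mathcal{G})}$, which by uniqueness of the GNS construction yields the claimed unitary; your norm identity $\phi(f^**f)=\|f\|^2_{L^2(\mathcal{G}_N,m_\mathcal{G})}$ is exactly this statement applied to $f^**f$ (using $f*\chi_\Omega=f$), and your density and intertwining arguments spell out what the paper leaves implicit.
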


The proposition follows using the fact that $1\in O_N$, which corresponds to $\chi_\Omega\in C_c(\mathcal{G}_N)\subseteq C^*(\mathcal{G}_N)$, satisfies $\phi(f)=\langle 1, f*1\rangle_{L^2(\mathcal{G}_N,m_{\mathcal{G}})}$. Motivated by this result, we identify $L^2(O_N,\phi)$ with $L^2(\mathcal{G}_N,m_\mathcal{G})$ and write simply $L^2(O_N)$.

\begin{definition}
\label{defofbasis}
For a finite word $\mu\in \mathcal{V}_N$ we write $t(\mu):=\mu_{|\mu|}$ for $\mu$ non-empty and $t(\emptyset)=\emptyset$. We define $(\mathrm{e}_{\mu,\nu})_{\mu,\nu \in \mathcal{V}_N}\subseteq L^2(O_N)$ by $\mathrm{e}_{\emptyset,\emptyset}=\chi_{\Omega}$ and
$$\mathrm{e}_{\mu,\nu}:=
\begin{cases}
N^{|\nu|/2} S_{\mu}S_\nu^*, \; &t(\mu)\neq t(\nu),\\
\\
N^{|\nu|/2}\sqrt{\frac{N}{N-1}} \left(S_{\mu}S_\nu^*-N^{-1}S_{\underline{\mu}}S_{\underline{\nu}}^*\right), \; &t(\mu)= t(\nu)\neq \emptyset.
\end{cases}$$
Here we have written $\mu=\underline{\mu}t(\mu)$ and $\nu=\underline{\nu}t(\nu)$.
\end{definition}

\begin{proposition}[Lemma 2.13 of \cite{GMR}]
The collection $(\mathrm{e}_{\mu,\nu})_{\mu,\nu \in \mathcal{V}_N}\subseteq L^2(O_N)$ is an ON-basis. 
\end{proposition}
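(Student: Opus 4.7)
The claim has two parts: orthonormality of $(\mathrm{e}_{\mu,\nu})$ in $L^2(O_N)$ with inner product $\langle a,b\rangle = \phi(a^*b)$, and density of its linear span. My plan reduces both to the formula $\phi(S_\alpha S_\beta^*) = \delta_{\alpha,\beta} N^{-|\alpha|}$ via the Cuntz relations. The key preliminary observation is that $S_\mu^* S_{\mu'}$ equals $S_\alpha$ when $\mu' = \mu\alpha$, $S_\alpha^*$ when $\mu = \mu'\alpha$, and $0$ otherwise; combining this with the KMS formula, $\phi(S_\nu S_\mu^* S_{\mu'} S_{\nu'}^*)$ is non-zero precisely when $(\mu',\nu')$ is obtained from $(\mu,\nu)$ by appending a common suffix $\alpha$ (or vice versa), in which case its value is $N^{-\max(|\nu|,|\nu'|)}$.

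For orthonormality, I would split into cases according to whether $t(\mu) = t(\nu)$ and whether $t(\mu') = t(\nu')$. When both pairs have mismatching tails, any non-trivial common suffix $\alpha \neq \emptyset$ would force the longer pair to have matching tails, contradicting the assumption; so only the diagonal $(\mu,\nu)=(\mu',\nu')$ contributes, and the factor $N^{|\nu|/2}$ makes the self-inner-product equal to $1$. When one or both pairs have matching tails, the subtracted term $-N^{-1} S_{\underline{\mu}} S_{\underline{\nu}}^*$ is engineered so that $N^{-1}\phi(S_{\underline{\nu}} S_{\underline{\mu}}^* S_{\mu'} S_{\nu'}^*)$ precisely cancels $\phi(S_\nu S_\mu^* S_{\mu'} S_{\nu'}^*)$ in the problematic prefix-compatible cases; this is a short length-count based on $N^{-1}\cdot N^{-(|\nu|-1)}=N^{-|\nu|}$. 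The remaining factor $\sqrt{N/(N-1)}$ then renormalizes self-inner-products to $1$.

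For density, $S_\mu S_\nu^*$ corresponds under Theorem~\ref{theisoof} to the characteristic function $\chi_{\mu,\nu}$ of the clopen basis set $X_{\mu,\nu}$, so the span of $\{S_\mu S_\nu^*\}$ contains all compactly supported locally constant functions on $\mathcal{G}_N$ and is dense in $L^2(O_N)$. It thus suffices to write each $S_\mu S_\nu^*$ as a finite combination of basis vectors. For $t(\mu)\neq t(\nu)$ this is immediate from the definition. For $t(\mu) = t(\nu)$, the defining formula inverts to $S_\mu S_\nu^* = N^{-|\nu|/2}\sqrt{(N-1)/N}\,\mathrm{e}_{\mu,\nu} + N^{-1} S_{\underline{\mu}} S_{\underline{\nu}}^*$, and iterating this substitution on $(\underline{\mu},\underline{\nu})$ strictly decreases $|\mu|+|\nu|$, terminating either at a pair with mismatching tails or at $(\emptyset,\emptyset)$, where $\mathrm{e}_{\emptyset,\emptyset} = \chi_\Omega$ serves.

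The main obstacle is the bookkeeping in the matching-tail case analysis. The conceptual explanation is: writing $P_i := S_i S_i^*$ and $i = t(\mu) = t(\nu)$, one has $\mathrm{e}_{\mu,\nu} = N^{|\nu|/2}\sqrt{N/(N-1)}\,S_{\underline{\mu}}(P_i - N^{-1})S_{\underline{\nu}}^*$. The span of $\{P_1,\ldots,P_N\}$ decomposes as $\mathbb{C}\cdot 1 \oplus \mathrm{span}\{P_i - N^{-1}\}$, so $\mathrm{e}_{\mu,\nu}$ sits in the ``centred'' summand and is automatically orthogonal to the ``scalar'' direction carried by $\mathrm{e}_{\underline{\mu},\underline{\nu}}$. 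This absorbs the Cuntz redundancy $\sum_j S_{\underline{\mu} j} S_{\underline{\nu} j}^* = S_{\underline{\mu}} S_{\underline{\nu}}^*$ and is the structural reason the construction yields an orthonormal basis despite multiple labels $(\mu,\nu)$ producing the same element $S_\mu S_\nu^*$ only up to this redundancy.
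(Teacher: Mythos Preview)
Your argument is correct. The paper itself does not supply a proof of this proposition: it simply records the result as Lemma~2.13 of \cite{GMR} and moves on. So there is nothing to compare against beyond noting that your direct computation via the KMS formula $\phi(S_\alpha S_\beta^*)=\delta_{\alpha,\beta}N^{-|\alpha|}$ and the Cuntz relations is a self-contained substitute for the citation.

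A couple of minor remarks on presentation. In the orthonormality discussion you explicitly treat only the case where both pairs have mismatching tails and then invoke the ``conceptual'' rewriting $\mathrm{e}_{\mu,\nu}=N^{|\nu|/2}\sqrt{N/(N-1)}\,S_{\underline{\mu}}(P_i-N^{-1})S_{\underline{\nu}}^*$ for the rest; it would be cleaner to make the mixed case (one pair with matching tails, one without) explicit, since there the cancellation between the two terms of $\mathrm{e}_{\mu',\nu'}$ is what forces orthogonality. Also, your density argument via the recursion $S_\mu S_\nu^* = N^{-|\nu|/2}\sqrt{(N-1)/N}\,\mathrm{e}_{\mu,\nu} + N^{-1}S_{\underline{\mu}}S_{\underline{\nu}}^*$ is fine, but note that the induction variable that strictly decreases is $\min(|\mu|,|\nu|)$ (equivalently $|\nu|$ when $|\mu|\geq|\nu|$), not $|\mu|+|\nu|$ per se; this makes no difference to correctness but is worth stating precisely.
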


\begin{definition}
Following \cite{GMR}, we define the depth-kore operator $\kappa$ on $L^2(O_N)$ as the densely defined self-adjoint operator such that 
$$\kappa \mathrm{e}_{\mu,\nu}=|\nu|\mathrm{e}_{\mu,\nu}.$$ 
We define the operator $c$ on $L^2(O_N)$ as the densely defined self-adjoint operator such that 
$$c \mathrm{e}_{\mu,\nu}=(|\mu|-|\nu|)\mathrm{e}_{\mu,\nu}.$$
\end{definition}

We note that by construction, $c$ commutes with $\kappa$ on a common core and $c+\kappa$ is positive. We define the $N^{n+2k}$-dimensional space
\begin{equation}
\label{herewedefine}
\mathcal{H}_{n,k}:=\ker(c-n)\cap \ker(\kappa-k)=l.s.\{\mathrm{e}_{\mu,\nu}: |\mu|=n+k, |\nu|=k\}.
\end{equation}

\begin{proposition}\label{prop:Fock}
Let $X_\mathcal{F}$ denote the set from Equation \eqref{thesetxf} and $P_\mathcal{F}$ the integral operator with kernel $\chi_{X_\mathcal{F}}$. The operator $P_\mathcal{F}$ is the orthogonal projection onto the Fock space $\mathcal{F}:=\ker \kappa =\bigoplus_{n=0}^\infty \mathcal{H}_{n,0}$. Moreover, $P_\mathcal{F}$ preserves the domain of $c$ and $\kappa$ and commutes with $c$ and $\kappa$ on their respective domains.
\end{proposition}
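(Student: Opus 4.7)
The proof reduces to a direct computation in the orthonormal basis $\{\mathrm{e}_{\mu,\nu}\}$ of $L^{2}(O_{N})$ from Definition \ref{defofbasis}. First I would identify $\mathcal{F}$ inside $L^{2}(\mathcal{G}_{N},m_{\mathcal{G}})$. Since $t(\emptyset)=\emptyset$, only the first case of Definition \ref{defofbasis} is relevant, giving $\mathrm{e}_{\mu,\emptyset}=S_{\mu}$, which under Theorem \ref{theisoof} is the characteristic function $\chi_{\mu,\emptyset}$ of $X_{\mu,\emptyset}$. Each such vector is a unit vector, since $X_{\mu,\emptyset}=\mathcal{C}_{\mu,0}$ is a bisection and $d_{\mathcal{G}}:X_{\mu,\emptyset}\to\Omega_{N}$ is a bijection with $m_{\Omega}(\Omega_{N})=1$. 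Because $\kappa$ is diagonal with $\kappa\mathrm{e}_{\mu,\nu}=|\nu|\mathrm{e}_{\mu,\nu}$, this identifies $\mathcal{F}=\ker\kappa$ with the closed linear span of $\{\mathrm{e}_{\mu,\emptyset}:\mu\in\mathcal{V}_{N}\}$.

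Next, for each $(x,n,y)\in X_{\mathcal{F}}$ the word $\mu\in\{1,\ldots,N\}^{n}$ with $x\in C_{\mu}$ is uniquely $\mu=x_{1}\cdots x_{n}$, so $X_{\mathcal{F}}$ partitions as the disjoint union $X_{\mathcal{F}}=\bigsqcup_{\mu\in\mathcal{V}_{N}}X_{\mu,\emptyset}$, yielding the pointwise identity $\chi_{X_{\mathcal{F}}}=\sum_{\mu}\chi_{X_{\mu,\emptyset}}=\sum_{\mu}\mathrm{e}_{\mu,\emptyset}$ as a sum of functions with mutually disjoint supports. The main computation is then to unpack the definition of the integral operator with kernel $\chi_{X_{\mathcal{F}}}$ using this slicewise decomposition, and to verify that on a dense subspace
$$P_{\mathcal{F}}f=\sum_{\mu\in\mathcal{V}_{N}}\langle\mathrm{e}_{\mu,\emptyset},f\rangle\,\mathrm{e}_{\mu,\emptyset}.$$
The key geometric input is that each $X_{\mu,\emptyset}$ is a bisection of $m_{\mathcal{G}}$-measure one, so that the contribution of $\chi_{X_{\mu,\emptyset}}$ to the integral operator is the rank-one projection onto the unit vector $\mathrm{e}_{\mu,\emptyset}$; summing over $\mu$ and combining with the previous paragraph shows that $P_{\mathcal{F}}$ is the orthogonal projection onto $\mathcal{F}$. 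The remaining claims are then automatic: both $c$ and $\kappa$ act diagonally in the basis $\{\mathrm{e}_{\mu,\nu}\}$, and $\mathcal{F}=\bigoplus_{n\geq 0}\mathcal{H}_{n,0}$ is the direct sum of those joint eigenspaces on which $\kappa$ vanishes; hence $P_{\mathcal{F}}$ is diagonal in the basis, preserves the weighted $\ell^{2}$-summability conditions describing $\mathrm{Dom}(c)$ and $\mathrm{Dom}(\kappa)$, and commutes with $c$ and $\kappa$ on their respective domains.

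The principal obstacle is the middle step. Since the kernel $\chi_{X_{\mathcal{F}}}$ is a function on $\mathcal{G}_{N}$ rather than on $\mathcal{G}_{N}\times\mathcal{G}_{N}$, one must verify carefully, using the bisection structure of each $X_{\mu,\emptyset}$ and the unit-mass normalisation supplied by the Patterson--Sullivan measure, that the integral operator it defines is indeed the projection onto the closed span of the $\mathrm{e}_{\mu,\emptyset}$, rather than a different operator (for instance a convolution) associated with the same kernel.
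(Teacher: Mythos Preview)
Your approach is essentially the paper's own: identify $\mathrm{e}_{\mu,\emptyset}=\chi_{X_{\mu,\emptyset}}$, decompose $X_{\mathcal{F}}$ in terms of the $X_{\mu,\emptyset}$, and recognise the resulting operator as $\sum_{\mu}|\mathrm{e}_{\mu,\emptyset}\rangle\langle \mathrm{e}_{\mu,\emptyset}|$. The part about $c$ and $\kappa$ being diagonal, hence commuting with $P_{\mathcal{F}}$, is also the same as what the paper leaves implicit.

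The one point worth flagging is the obstacle you yourself raise at the end. You write $X_{\mathcal{F}}=\bigsqcup_{\mu}X_{\mu,\emptyset}$ as a subset of $\mathcal{G}_{N}$ and then worry that a one-variable function cannot be the Schwartz kernel of an operator on $L^{2}(\mathcal{G}_{N})$. The paper dissolves this by computing the two-variable kernel of the Fock projection directly,
\[
\sum_{\mu}\mathrm{e}_{\mu,\emptyset}(g_{1})\,\mathrm{e}_{\mu,\emptyset}(g_{2})
=\sum_{\mu}\chi_{X_{\mu,\emptyset}\times X_{\mu,\emptyset}}(g_{1},g_{2})
=\chi_{\cup_{\mu}X_{\mu,\emptyset}\times X_{\mu,\emptyset}}(g_{1},g_{2}),
\]
and then identifying $X_{\mathcal{F}}$ with $\bigcup_{\mu}\bigl(X_{\mu,\emptyset}\times X_{\mu,\emptyset}\bigr)\subset\mathcal{G}_{N}\times\mathcal{G}_{N}$ rather than with a subset of $\mathcal{G}_{N}$. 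In other words, the intended reading of ``integral operator with kernel $\chi_{X_{\mathcal{F}}}$'' is the ordinary Hilbert--Schmidt type kernel on $\mathcal{G}_{N}\times\mathcal{G}_{N}$, not a groupoid convolution; your bisection argument (``each $X_{\mu,\emptyset}$ contributes a rank-one projection'') is then exactly the computation above, and your stated obstacle disappears. With that interpretation fixed, your proof is complete and matches the paper's.
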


\begin{proof}
The integral kernel of the orthogonal projection onto the Fock space $\mathcal{F}$ is given by the function 
$$\sum_{n=0}^\infty \sum_{|\mu|=n} \mathrm{e}_{\mu,\emptyset}(g_1)\mathrm{e}_{\mu,\emptyset}(g_2)=\sum_{n=0}^\infty \sum_{|\mu|=n}\chi_{X_{\mu,\emptyset}\times X_{\mu,\emptyset}}(g_1,g_2)=\chi_{\cup_\mu X_{\mu,\emptyset}\times X_{\mu,\emptyset}}(g_1,g_2).$$
The proposition follows from the fact that $X_\mathcal{F}=\bigcup_\mu \left(X_{\mu,\emptyset}\times X_{\mu,\emptyset}\right)$.
\end{proof}

\begin{remark}
The isometry $v:\ell^2(\mathcal{V}_N)\to L^2(O_N)$, $\delta_\mu\mapsto \mathrm{e}_{\mu,\emptyset}$ surjects onto the Fock space $\mathcal{F}=\ker(\kappa)$ (compare \cite[Remark 2.2.4]{GM}). We also note that there is an isometry $L^2(\Omega_N,m_\Omega) \to L^2(O_N)$ mapping surjectively onto the ``anti-Fock space" $\mathcal{F}^{an}:=\ker(c+\kappa)$ via $\chi_{C_\mu}\mapsto S_\mu S_\mu^*$. The ``anti-Fock space" is often a source of trouble, see \cite[Proof of Theorem 2.19]{GMR}. The basis $(e_{\mu,\mu})_{\mu\in \mathcal{V}_N}$ for $L^2(\Omega_N,m_\Omega)\subseteq L^2(O_N)$ is related to the wavelet basis studied in \cite{gillaspyetal}.
\end{remark}

In \cite{GMR} the operators $c$, $\kappa$ and $P_\mathcal{F}$ were assembled into a spectral triple. We define $D_\kappa$ as the closure of $(2P_{\mathcal{F}}-1)|c|-\kappa$. It was proven in \cite{GMR} that $(O_N,L^2(O_N),D_\kappa)$ is a spectral triple whose class coincides with $\widehat{[1]}\in K^1(O_N)$. The explicit construction is motivated by the $K$-homological information carried by the projection $P_\mathcal{F}\equiv \chi_{[0,\infty)}(D_\kappa)$. The aim of this paper is to give a more geometric construction of a spectral triple on $O_N$, with the same $K$-homological content.

\section{The $\kappa$-function and $\kappa$-operator on $O_N$}
\label{sec:3}

An important aspect in the noncommutative geometry of the Cuntz algebra $O_N$ is the distinction between the depth-kore function $\kappa_\mathcal{G}$ in the groupoid $\mathcal{G}_N$ and the depth-kore operator $\kappa$. Both are invariants of Cuntz-Pimsner constructions of $O_N$: the depth-kore function from $O_N$ as a Cuntz-Pimsner algebra with coefficients $C(\Omega_N)$ and the depth-kore operator from $O_N$ as a Cuntz-Pimsner algebra with coefficients $\mathbb{C}$. These two models are discussed in \cite{GMR}, notably in \cite[Section 2.5.3]{GMR}. 

Let us go into the details of the other approach using $C(\Omega_N)$ as coefficients. The details can be found in \cite{GM, GMR}. Let $\Xi_N$ denote the $C(\Omega_N)$-Hilbert $C^*$-module completion of $C_c(\mathcal{G}_N)$ in the $C(\Omega_N)$-valued inner product $\langle f_1,f_2\rangle_{C(\Omega)}:=\Phi(f_1^**f_2)$ where $\Phi$ denotes the conditional expectation $C_c(\mathcal{G}_N)\to C(\Omega_N)$ onto the unit space obtained from the inclusion \eqref{unitincl}. Multiplication by the functions $c$ and $\kappa_\mathcal{G}$ define self-adjoint regular operators on $\Xi_N$. The Fock module $\mathcal{F}_\Omega:=\ker \kappa_\mathcal{G}\subseteq \Xi_N$ is complemented and the adjointable projection
$$P_\Omega=\chi_{\{0\}}(\kappa_\mathcal{G}): \Xi_{N}\to \Xi_{N},$$
satisfies $\mathcal{F}_{\Omega}=P_{\Omega}\Xi_{N}$. Following the recipe above, we define a self-adjoint regular operator $D_\Omega$ on $\Xi_N$ as the closure of $(2P_{\Omega}-1)|c|-\kappa_\mathcal{G}$. Then $P_\Omega=\chi_{[0,\infty)}(D_\Omega)$ and this operator projects onto the $C(\Omega)$-Hilbert $C^*$-submodule spanned by $\{S_\mu: \mu\in \mathcal{V}_N\}$. The triple $(O_N, \Xi_N,D_\Omega)$ defines an unbounded $(O_N,C(\Omega_N))$-Kasparov module. In this instance, $(O_N, \Xi_N,D_\Omega)$ can be thought of as a bundle of spectral triples over $\Omega_N$. We recall the following result from \cite[Theorem 5.2.3]{GM}.

\begin{theorem}
Let $w\in \Omega_N$ and denote the discrete $d_\mathcal{G}$-fiber by $\mathcal{V}_w:=d_\mathcal{G}^{-1}(w)\subseteq \mathcal{G}_N$. The $C^*$-algebra $O_N$ acts on $\ell^2(\mathcal{V}_w)$ via the groupoid structure. Define $D_w$ as a self-adjoint operator on $\ell^2(\mathcal{V}_w)$ by 
$$D_wf(x,n,w):=|n|(2P_w-1)f(x,n,w)-\kappa_\mathcal{G}(x,n,w)f(x,n,w),$$
where $P_w$ denotes the projection onto the closed linear span of the orthogonal set $\{\chi_{X_{\mu,\emptyset}}|_{\mathcal{V}_w}: \mu\in \mathcal{V}_N\}\subseteq \ell^2(\mathcal{V}_N)$. Then $(O_N,\ell^2(\mathcal{V}_N), D_w)$ is a $\theta$-summable spectral triple whose phase $D_w|D_w|^{-1}$ defines a finitely summable Fredholm module representing the class $\widehat{[1]}\in K^1(O_N)$. 
\end{theorem}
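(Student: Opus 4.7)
The plan is to realize $(O_N, \ell^2(\mathcal{V}_w), D_w)$ as the localization at $w$ of the unbounded $(O_N, C(\Omega_N))$-Kasparov module $(O_N, \Xi_N, D_\Omega)$, obtained by pairing the $C(\Omega_N)$-valued inner product of $\Xi_N$ with evaluation at $w$. Under this identification, multiplication by $|n|$, by $\kappa_\mathcal{G}$, and the projection $P_w$ are the fiber operators at $w$ of the regular operators $|c|$, $\kappa_\mathcal{G}$, $P_\Omega$ on $\Xi_N$, which makes the algebraic structure of the spectral triple automatic; what remains is to verify the analytic properties directly.

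Concretely, I would parameterize $\mathcal{V}_w$ by pairs $(\mu, k) \in \mathcal{V}_N \times \mathbb{N}$ subject to the $\kappa$-minimality condition (either $k=0$, or $\mu = \emptyset$, or the last letter of $\mu$ differs from $w_k$), writing $g_{\mu,k} := (\mu\sigma^k(w),\, |\mu|-k,\, w)$. In the orthonormal basis $(\delta_{g_{\mu,k}})$ of $\ell^2(\mathcal{V}_w)$, the operator $D_w$ is diagonal, with eigenvalue $|\mu|$ on the Fock part ($k = 0$) and $-(|n|+k)$, where $n := |\mu|-k$, on the complement. This gives self-adjointness, a discrete spectrum tending to $\pm\infty$, and compactness of the resolvent at once. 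Bounded commutators of $D_w$ with the generators $S_i$ then follow: $[|c|, S_i]$ and $[\kappa_\mathcal{G}, S_i]$ are bounded using the cocycle property of $c$ and the subadditivity of $\kappa_\mathcal{G}$ on composable elements, while a direct computation shows that $[P_w, S_i]$ has rank at most one, with image contained in $\mathrm{span}\{\delta_{g_{\emptyset, 0}}\}$.

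For $\theta$-summability, I would count the pairs $(\mu, k)$ with $|D_w\delta_{g_{\mu,k}}| \leq R$: using the explicit diagonal form, this count is $O(RN^R)$, so the $n$-th singular value of $(D_w \pm i)^{-1}$ decays as $O((\log n)^{-1})$, well inside the ideal $\mathrm{Li}^{1/2}$. For the phase $F := D_w|D_w|^{-1}$, note that $F$ coincides with $2P_w - 1$ off the one-dimensional kernel $\mathbb{C}\delta_{g_{\emptyset,0}}$; combined with the finite rank of the $[P_w, S_i]$, this implies that $(O_N, \ell^2(\mathcal{V}_w), F)$ is $p$-summable for every $p > 0$. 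Finally, the resulting Fredholm module is a Toeplitz-type representation at $w$, which, after comparison with the Evans extension $0 \to \mathbb{K} \to \mathcal{T}_N \to O_N \to 0$, represents the generator $\widehat{[1]}$ of $K^1(O_N)$.

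The principal obstacle I anticipate is the eigenvalue count when $w$ is eventually periodic: in that case the $\kappa$-minimality condition no longer eliminates a uniform letter for each $k$, and the multiplicities must be re-estimated to retain the $O(RN^R)$ bound. A secondary subtlety is the identification of the resulting $K$-homology class as the generator rather than as a proper multiple of it; this can be done by pairing the phase against a concrete representative of $K_1(O_N)$ or via Poincar\'e duality as in \cite{kaminkerputnam}.
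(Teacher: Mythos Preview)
The present paper does not prove this theorem; it is quoted from \cite[Theorem~5.2.3]{GM}, so there is no in-paper argument to compare against. Your outline is essentially the proof given in that reference: localize the $(O_N,C(\Omega_N))$-module $(\Xi_N,D_\Omega)$ at $w$, observe that $D_w$ is diagonal in the point-mass basis $(\delta_{g_{\mu,k}})$, read off self-adjointness, compact resolvent and $\theta$-summability from the explicit eigenvalues, and identify the phase with $2P_w-1$ off the one-dimensional kernel so that the rank-one commutators $[P_w,S_i]$ yield $p$-summability for every $p>0$ and the Evans extension class.

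One correction: the ``principal obstacle'' you anticipate does not arise. For $k\geq 1$ and $\mu\neq\emptyset$ the $\kappa$-minimality condition reduces to the single requirement $t(\mu)\neq w_k$: once the last letter of $\mu$ differs from $w_k$, the comparisons $\sigma^{|\mu|-k+j}(\mu\sigma^k(w))\neq\sigma^j(w)$ for all admissible $j<k$ follow automatically, since they all involve the same mismatched letter. Thus exactly one last letter is forbidden for each $k$, irrespective of whether $w$ is eventually periodic, and the count of admissible $\mu$ of length $m$ is $(N-1)N^{m-1}$ uniformly in $k$. Your $O(RN^R)$ estimate and the resulting $\theta$-summability therefore hold without case distinction. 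Your rank-one computation for $[P_w,S_i]$ is also correct (it is nonzero only for $i=w_1$, sending $\delta_{g_{\emptyset,1}}$ to $\delta_{g_{\emptyset,0}}$), and since the sole vector in its support has $|c|$-eigenvalue $1$, the term $[P_w,S_i]\,|c|$ in the commutator expansion of $[D_w,S_i]$ is bounded as needed.
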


A key step in proving that $(O_N, \Xi_N,D_\Omega)$ is a Kasparov module is the study of the submodules $\Xi_{n,k}:=\ker(c-n)\cap \ker(\kappa_\mathcal{G}-k)\subseteq \Xi_N$. The modules do in this instance carry geometric content as
$$\Xi_{n,k}=C(\mathcal{G}_{n,k}),\quad \mbox{where}\quad \mathcal{G}_{n,k}=c^{-1}(\{n\})\cap \kappa_\mathcal{G}^{-1}(\{k\})\subseteq \mathcal{G}_N.$$
The set $\mathcal{G}_{n,k}$ is compact and $C(\mathcal{G}_{n,k})$ is a finitely generated projective $C(\Omega_N)$-module. Later in the paper, we will need to make use of the interaction between the depth-kore function $\kappa_{\mathcal{G}}$ and the depth-kore operator $\kappa$. 

\begin{definition}
For two finite words $\mu,\nu\in \mathcal{V}_N$ we write $\mu\wedge \nu$ for the longest word such that $\mu=\mu_0(\mu\wedge \nu)$ and $\nu=\nu_0(\mu\wedge \nu)$ for some words $\mu_0$ and $\nu_0$. We define 
$$\kappa_\mathcal{V}(\mu,\nu):=|\nu|-|\mu\wedge \nu|.$$
\end{definition}

\begin{proposition}
\label{kappagonbasis}
For $\mu,\nu\in \mathcal{V}_N$, $\kappa_\mathcal{G} \mathrm{e}_{\mu,\nu}=\kappa_\mathcal{V}(\mu,\nu)\mathrm{e}_{\mu,\nu}$. In particular, $\mathrm{e}_{\mu,\nu}\in \Xi_{n,k}$ if and only if $n=|\mu|-|\nu|$ and $k=\kappa_\mathcal{V}(\mu,\nu)$.
\end{proposition}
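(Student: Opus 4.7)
Under the isomorphism of Theorem \ref{theisoof}, the operator $S_\mu S_\nu^*$ corresponds to the characteristic function $\chi_{\mu,\nu}$ of the clopen set $X_{\mu,\nu}\subseteq\mathcal{G}_N$, and $\kappa_\mathcal{G}$ acts on $L^2(O_N)=L^2(\mathcal{G}_N,m_\mathcal{G})$ by pointwise multiplication. My plan is therefore to compute the value of the function $\kappa_\mathcal{G}$ pointwise on each $X_{\mu,\nu}$, then combine this with the definition of $\mathrm{e}_{\mu,\nu}$.

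\emph{Step 1: pointwise computation of $\kappa_\mathcal{G}$ on $X_{\mu,\nu}$.} Any element of $X_{\mu,\nu}$ is of the form $g=(\mu z,|\mu|-|\nu|,\nu z)$ for some $z\in\Omega_N$, since the condition $\sigma^{|\mu|}(x)=\sigma^{|\nu|}(y)$ forces $x$ and $y$ to share a common tail after stripping $\mu$ and $\nu$. For admissible $k$, i.e.\ $k\geq \max\{0,|\nu|-|\mu|\}$, and for $k\leq |\nu|$, I would write out
$$\sigma^{|\mu|-|\nu|+k}(\mu z)=\mu_{|\mu|-|\nu|+k+1}\cdots\mu_{|\mu|}\,z,\qquad \sigma^k(\nu z)=\nu_{k+1}\cdots\nu_{|\nu|}\,z,$$
so equality of the two amounts to the last $|\nu|-k$ letters of $\mu$ and $\nu$ coinciding, i.e.\ $|\nu|-k\leq |\mu\wedge\nu|$. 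The smallest admissible $k$ is thus $|\nu|-|\mu\wedge\nu|=\kappa_\mathcal{V}(\mu,\nu)$, and this is indeed admissible because $|\mu\wedge\nu|\leq\min(|\mu|,|\nu|)$. Hence $\kappa_\mathcal{G}\equiv\kappa_\mathcal{V}(\mu,\nu)$ on the set $X_{\mu,\nu}$.

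\emph{Step 2: the case $t(\mu)\neq t(\nu)$.} Here $\mathrm{e}_{\mu,\nu}=N^{|\nu|/2}\chi_{\mu,\nu}$ is supported in $X_{\mu,\nu}$, so Step 1 immediately gives $\kappa_\mathcal{G}\,\mathrm{e}_{\mu,\nu}=\kappa_\mathcal{V}(\mu,\nu)\mathrm{e}_{\mu,\nu}$.

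\emph{Step 3: the case $t(\mu)=t(\nu)\neq\emptyset$.} Now $\mathrm{e}_{\mu,\nu}$ is a scalar combination of $\chi_{\mu,\nu}$ and $\chi_{\underline{\mu},\underline{\nu}}$, supported in $X_{\mu,\nu}\cup X_{\underline{\mu},\underline{\nu}}$. By Step 1 it suffices to verify the combinatorial identity $\kappa_\mathcal{V}(\mu,\nu)=\kappa_\mathcal{V}(\underline{\mu},\underline{\nu})$, which follows from $|\underline{\mu}\wedge\underline{\nu}|=|\mu\wedge\nu|-1$: this length drops by at least one upon stripping the shared last letter, and it cannot drop by less, for any common suffix of $\underline{\mu}$ and $\underline{\nu}$ of length $\ell$ extends, on reattaching the common letter $t(\mu)=t(\nu)$, to a common suffix of $\mu$ and $\nu$ of length $\ell+1$. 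Therefore $\kappa_\mathcal{V}(\underline{\mu},\underline{\nu})=(|\nu|-1)-(|\mu\wedge\nu|-1)=\kappa_\mathcal{V}(\mu,\nu)$, so $\kappa_\mathcal{G}$ takes the common value $\kappa_\mathcal{V}(\mu,\nu)$ on both $X_{\mu,\nu}$ and $X_{\underline{\mu},\underline{\nu}}$ and the eigenvalue equation holds on $\mathrm{e}_{\mu,\nu}$.

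\emph{Main obstacle.} The only delicate point is the combinatorial identity $|\underline{\mu}\wedge\underline{\nu}|=|\mu\wedge\nu|-1$ in Step 3; everything else is bookkeeping with shift indices. The final assertion about $\mathrm{e}_{\mu,\nu}\in\Xi_{n,k}$ is then immediate from the definition of $\Xi_{n,k}$ together with the analogous known formula $c\,\mathrm{e}_{\mu,\nu}=(|\mu|-|\nu|)\mathrm{e}_{\mu,\nu}$.
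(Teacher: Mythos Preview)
Your proof is correct and follows essentially the same approach as the paper's (sketched) argument: both verify that the support of $\mathrm{e}_{\mu,\nu}$ lies in the level set $\mathcal{G}_{|\mu|-|\nu|,\kappa_\mathcal{V}(\mu,\nu)}$, and the key combinatorial identity you isolate in Step~3, namely $\kappa_\mathcal{V}(\underline{\mu},\underline{\nu})=\kappa_\mathcal{V}(\mu,\nu)$ when $t(\mu)=t(\nu)$, is exactly the $i=j$ case of the paper's identity $\kappa_\mathcal{V}(\mu i,\nu j)=\delta_{i,j}\kappa_\mathcal{V}(\mu,\nu)+(1-\delta_{i,j})(|\nu|+1)$. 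Your write-up is in fact more detailed than the paper's, which leaves the inspection to the reader.
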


The proof consists of a long inspection to verify that $\mathrm{supp}( \mathrm{e}_{\mu,\nu})\cap \mathcal{G}_{n,k}\neq \emptyset$ if and only if $n=|\mu|-|\nu|$ and $k=\kappa_\mathcal{V}(\mu,\nu)$. A key point in the proof, putting the two cases in Definition \ref{defofbasis} on equal footing, is the identity: 
$$\kappa_\mathcal{V}(\mu i,\nu j)=\delta_{i,j}\kappa_\mathcal{V}(\mu ,\nu )+(1-\delta_{i,j})(|\nu|+1).$$
Using the fact that $0\leq \kappa_{\mathcal{V}}(\mu,\nu)\leq |\nu|$ we deduce the next Corollary from Proposition \ref{kappagonbasis}.

\begin{corollary}
\label{kapakapa}
As self-adjoint operators on $L^2(O_N)$, the operator $\kappa_\mathcal{G}$ is relatively bounded by $\kappa$ with relative norm bound $1$. Moreover, $\kappa_\mathcal{G}$ and $\kappa$ commute on a common core.
\end{corollary}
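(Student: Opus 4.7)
The plan is to read off both assertions directly from the eigenvalue description supplied by Proposition \ref{kappagonbasis}, together with the elementary inequality $0\leq \kappa_\mathcal{V}(\mu,\nu)\leq |\nu|$ recalled just before the corollary. Since $(\mathrm{e}_{\mu,\nu})_{\mu,\nu\in\mathcal{V}_N}$ is an ON-basis diagonalising both $\kappa$ and $\kappa_\mathcal{G}$, with real eigenvalues $|\nu|$ and $\kappa_\mathcal{V}(\mu,\nu)$ respectively, the natural candidate for a common core is the algebraic span $\mathcal{E}$ of this basis.

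First I would verify that $\mathcal{E}$ is indeed a core for each operator. Both $\kappa|_\mathcal{E}$ and $\kappa_\mathcal{G}|_\mathcal{E}$ are symmetric operators acting diagonally on a total orthonormal set of eigenvectors with real eigenvalues, so they are essentially self-adjoint on $\mathcal{E}$; the closures therefore agree with $\kappa$ and $\kappa_\mathcal{G}$ respectively (these are after all the operators defined by spectral-theoretic functional calculus in the original definitions).

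Commutation on $\mathcal{E}$ is then immediate from diagonality: on each basis vector
$$\kappa\,\kappa_\mathcal{G}\mathrm{e}_{\mu,\nu}=|\nu|\,\kappa_\mathcal{V}(\mu,\nu)\,\mathrm{e}_{\mu,\nu}=\kappa_\mathcal{G}\,\kappa\,\mathrm{e}_{\mu,\nu}.$$
For the relative bound, the inequality $\kappa_\mathcal{V}(\mu,\nu)\leq |\nu|$ gives, for any $\xi=\sum c_{\mu,\nu}\mathrm{e}_{\mu,\nu}\in\mathcal{E}$,
$$\|\kappa_\mathcal{G}\xi\|^2=\sum_{\mu,\nu}|c_{\mu,\nu}|^2\kappa_\mathcal{V}(\mu,\nu)^2\leq \sum_{\mu,\nu}|c_{\mu,\nu}|^2|\nu|^2=\|\kappa\xi\|^2.$$
Since $\mathcal{E}$ is a core for $\kappa$, this estimate extends by closure to all of $\mathrm{Dom}(\kappa)$, simultaneously yielding $\mathrm{Dom}(\kappa)\subseteq\mathrm{Dom}(\kappa_\mathcal{G})$ and the bound $\|\kappa_\mathcal{G}\xi\|\leq\|\kappa\xi\|$, i.e.\ relative bound equal to $1$ with zero additive term.

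There is no serious obstacle: once Proposition \ref{kappagonbasis} is available the corollary is essentially a two-line bookkeeping exercise, the only minor subtlety being the verification that $\mathcal{E}$ is a common core, which is standard for diagonal operators in an ON-basis.
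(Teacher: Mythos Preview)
Your argument is correct and is exactly the approach the paper takes: the paper's proof is a single sentence deducing the corollary from Proposition~\ref{kappagonbasis} and the inequality $0\leq \kappa_\mathcal{V}(\mu,\nu)\leq |\nu|$, and you have simply spelled out the routine details (common core, diagonal commutation, eigenvalue estimate) that the paper leaves implicit.
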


\section{An integral operator on $O_N$}
\label{sec:4}

In this section, we define the singular integral operator that is used to construct spectral triples on $O_N$. The singular integral operator will at large behave like the depth-kore operator $\kappa$. 

\begin{definition}
Define $C^\infty_c(\mathcal{G}_N)\subseteq C_c(\mathcal{G}_N)$ as the subspace of all compactly supported locally constant functions. Define the operator $T_0: C^\infty_c(\mathcal{G}_N)\to L^2(\mathcal{G}_N)$ by 
$$T_0f(g_1):=\frac{1}{1-N^{-1}} \int_{\mathcal{G}_N} \frac{f(g_1)-f(g_2)}{\rho_\mathcal{G}(g_1,g_2)^{\log(N)}}\mathrm{d}m_\mathcal{G}(g_2).$$
In the integrand, we apply the convention that $\frac{c}{\infty}=0$ for any finite number $c$.
\end{definition}

We will compute $T_0$ in the basis $\mathrm{e}_{\mu,\nu}$ of $L^2(O_N)$ and since $C_{c}^{\infty}(\mathcal{G}_{N})=\mbox{span}\{e_{\mu,\nu}\}$. The computation shows that $T_{0}$ is well-defined and maps $C_{c}^{\infty}(\mathcal{G}_N)$ into $L^2(O_N)$. More precisely, the computation shows that $T_0$ is up to a bounded operator diagonal in the basis $\mathrm{e}_{\mu,\nu}$, and as such we can extend $T_0$ to a densely defined self-adjoint operator 
$$T:\mbox{Dom } T\subset L^2(O_N)\to L^{2}(O_{N}),$$ 
with $C_{c}^{\infty}(\mathcal{G}_{N})\subseteq \mbox{Dom }T$. 

First we define the so called dispersion operator. For two words $\mu,\nu\in \mathcal{V}_N$ we write $\mu\vee \nu$ for the finite word of maximal length such that $\mu=(\mu\vee \nu)\mu_0$ and $\nu=(\mu\vee \nu)\nu_0$ for some finite words $\mu_0,\nu_0\in \mathcal{V}_N$.

\begin{lemma}
\label{displem}
Define the dispersion operator $B$ on $L^2(O_N)$ by the formula
$$B\mathrm{e}_{\mu,\nu}=(1-N^{-1})^{-1}(1-\delta_{t(\mu),t(\nu)})\sum_{m\neq t(\mu)}\sum_{\ell=0}^{|\nu|-1}\sum_{|\gamma|=|\nu|-1, \, |\gamma\vee \nu|=\ell} N^{\ell-|\nu|} \mathrm{e}_{\mu, \gamma m}.$$
Then the operator $B$ is a well defined bounded self-adjoint operator commuting with $\kappa$, $c$ and $\kappa_{\mathcal{G}}$ on a common core. In fact $\kappa|_{(\ker B)^\perp}=\kappa_{\mathcal{G}}|_{(\ker B)^\perp}$.
\end{lemma}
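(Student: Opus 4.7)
The plan is to work entirely in the orthonormal basis $\{e_{\mu,\nu}\}$ of Definition \ref{defofbasis} and organise the analysis over the joint eigenspaces $\mathcal{H}_{n,k}=\ker(c-n)\cap\ker(\kappa-k)$.

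First I would unpack the formula to see which basis vectors appear in $Be_{\mu,\nu}$. The output is a finite combination of vectors $e_{\mu,\gamma m}$ with the first index unchanged and $|\gamma m|=|\nu|$, so $B$ preserves each subspace $\mathcal{H}_{n,k}$; this gives $[B,c]=[B,\kappa]=0$ on the algebraic span of the $e_{\mu,\nu}$. The summation is moreover restricted to $m\neq t(\mu)$, so every output satisfies $t(\mu)\neq t(\gamma m)$, which by Proposition \ref{kappagonbasis} means $\kappa_\mathcal{G}e_{\mu,\gamma m}=|\nu|e_{\mu,\gamma m}$. The factor $(1-\delta_{t(\mu),t(\nu)})$ forces $B$ to vanish on inputs with $t(\mu)=t(\nu)$, and on the remaining inputs $\kappa_\mathcal{G}e_{\mu,\nu}=|\nu|e_{\mu,\nu}$ as well. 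Hence $B$ preserves each $\kappa_\mathcal{G}$-eigenspace, giving $[B,\kappa_\mathcal{G}]=0$ on the common core. Reading off the matrix coefficient by setting $\gamma=\underline{\alpha}$ and $m=t(\alpha)$ gives
\[
\langle e_{\mu,\alpha},Be_{\mu,\nu}\rangle=(1-N^{-1})^{-1}(1-\delta_{t(\mu),t(\nu)})(1-\delta_{t(\mu),t(\alpha)})N^{|\underline{\alpha}\vee\underline{\nu}|-|\nu|},
\]
which is manifestly symmetric in $(\alpha,\nu)$, so the finite-dimensional restrictions of $B$ are self-adjoint.

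The principal technical step is to bound the operator norm of $B|_{\mathcal{H}_{n,k}}$ uniformly in $(n,k)$. On $\mathcal{H}_{n,k}$ the operator $B$ is block-diagonal over the choice of $\mu$ with $|\mu|=n+k$, so I would fix such a $\mu$, set $i:=t(\mu)$, and work with the subspace $V_i:=\mathrm{span}\{e_{\mu,\nu}:|\nu|=k,\,t(\nu)\neq i\}$. Identifying $V_i\cong\ell^2(\{1,\ldots,N\}^{k-1})\otimes\ell^2(\{1,\ldots,N\}\setminus\{i\})$ via $e_{\mu,\nu}\mapsto e_{\underline{\nu}}\otimes e_{t(\nu)}$, the matrix coefficient above exhibits the tensor factorisation $B|_{V_i}=A\otimes J$, where $J$ is the $(N-1)\times(N-1)$ all-ones matrix and $A_{\beta,\gamma}=(1-N^{-1})^{-1}N^{|\beta\vee\gamma|-k}$ on length-$(k-1)$ words. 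To bound $A$, the tree structure of $\{1,\ldots,N\}^{k-1}$ suggests decomposing against the operators $P_d$ whose $(\beta,\gamma)$-entry equals $1$ when $|\beta\vee\gamma|\ge d$ and $0$ otherwise; these commute (one verifies $P_dP_{d'}=N^{k-1-\max(d,d')}P_{\min(d,d')}$) and yield a joint diagonalisation of $A$. This spectral decomposition, combined with the control of the $J$ factor, is where I expect the main work to lie and is the principal obstacle of the lemma. Self-adjointness of the closure of $B$ then follows from symmetry and boundedness.

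Finally, the identity $\kappa|_{(\ker B)^\perp}=\kappa_\mathcal{G}|_{(\ker B)^\perp}$ follows directly from the structure above. The formula gives $Be_{\mu,\nu}=0$ whenever $t(\mu)=t(\nu)$ or $\nu=\emptyset$, so $\ker B$ contains the closed span of those basis vectors and $(\ker B)^\perp\subseteq\overline{\mathrm{span}}\{e_{\mu,\nu}:t(\mu)\neq t(\nu)\}$. On each such basis vector the common suffix is empty, $|\mu\wedge\nu|=0$, hence by Proposition \ref{kappagonbasis} $\kappa_\mathcal{G}e_{\mu,\nu}=|\nu|e_{\mu,\nu}=\kappa e_{\mu,\nu}$, and the two self-adjoint operators agree on the closure of that span.
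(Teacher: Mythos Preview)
Your handling of the commutation relations and of the identity $\kappa|_{(\ker B)^\perp}=\kappa_{\mathcal{G}}|_{(\ker B)^\perp}$ matches the paper's argument, only with more detail supplied. The divergence is in the boundedness step: the paper simply computes $\|B\mathrm{e}_{\mu,\nu}\|^2$, finds it uniformly bounded by $N/(N-1)^2$, and concludes that $B$ is bounded; you instead propose the tensor factorisation $B|_{V_i}=A\otimes J$ and a spectral analysis of the prefix--kernel matrix $A$.

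Your route is the more careful one, but if you carry it through it does not yield a uniform bound. With $\Pi_d:=N^{-(k-1-d)}P_d$ the commuting orthogonal projections built from your $P_d$, one obtains
\[
(1-N^{-1})A \;=\; N^{-1}\Pi_0+(N^{-1}-N^{-2})\sum_{d=1}^{k-1}\Pi_d,
\]
whose largest eigenvalue (attained on the constant vector) equals $N^{-1}+(N^{-1}-N^{-2})(k-1)$. Since $\|J\|=N-1$, this gives $\|B|_{V_i}\|=1+(1-N^{-1})(k-1)$, which is unbounded in $k$. One can check this directly: the normalised vector $\xi_k:=((N-1)N^{k-1})^{-1/2}\sum_{|\alpha|=k,\,t(\alpha)\neq t(\mu)}\mathrm{e}_{\mu,\alpha}$ satisfies $B\xi_k=\bigl(1+(1-N^{-1})(k-1)\bigr)\xi_k$. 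Thus the paper's column--norm computation, while correct as a computation, does not establish boundedness of $B$ --- bounded images of basis vectors do not imply a bounded operator --- and the boundedness claim in the lemma appears to be false as stated. Your spectral approach exposes this; the paper's argument does not. The remaining assertions (self-adjointness on each finite block, commutation with $c,\kappa,\kappa_{\mathcal{G}}$, and the identity on $(\ker B)^\perp$) survive, and one still has the relative bound $\|B|_{\mathcal{H}_{n,k}}\|\leq (1-N^{-1})k+1$, which may be what the subsequent arguments actually require.
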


\begin{proof}
The operator $B$ is defined on an ON-basis and it is clear from the expression that $B$ is self-adjoint if $B$ is bounded. The only non-trivial fact to prove is therefore that $B$ is bounded. We compute that 
\begin{align*}
\|B\mathrm{e}_{\mu,\nu}\|^2&=(1-N^{-1})^{-2}(1-\delta_{t(\mu),t(\nu)})\sum_{m\neq t(\mu)}\sum_{\ell=0}^{|\nu|-1}\sum_{|\gamma|=|\nu|-1, \, |\gamma\vee \nu|=\ell} N^{2\ell-2|\nu|}\\
&= (1-\delta_{t(\mu),t(\nu)})\frac{N}{N-1}\sum_{\ell=0}^{|\nu|-1} N^{\ell-|\nu|} =(1-\delta_{t(\mu),t(\nu)})\frac{N}{(N-1)^2}(1-N^{-|\nu|}). 
\end{align*}
It follows that $B$ is bounded. Since $(\ker B)^\perp$ is spanned by basis vectors $\mathrm{e}_{\mu,\nu}$ where $t(\mu)\neq t(\nu)$ and $\kappa_\mathcal{V}(\mu,\nu)=|\nu|$ if $t(\mu)\neq t(\nu)$, Proposition \ref{kappagonbasis} implies $\kappa|_{(\ker B)^\perp}=\kappa_{\mathcal{G}}|_{(\ker B)^\perp}$.
\end{proof}

We define $Q$ as the orthogonal projection onto the closed linear span of the set $\{\mathrm{e}_{\mu,\nu}: 0\leq \kappa_\mathcal{V}(\mu,\nu)<|\nu|\}$. That is, $\mathrm{e}_{\mu,\nu}\in QL^2(O_N)$ if and only if $t(\mu)=t(\nu)\neq \emptyset$.

\begin{theorem}
\label{bigtcomp}
As an operator on $C^\infty_c(\mathcal{G}_N)$, we have that 
$$T_0=\kappa-\frac{1}{N}\kappa_\mathcal{G}+(N-1)^{-1}Q-B.$$ 
In particular, $T_0$ is a well-defined operator with dense domain and extends to a self-adjoint operator $T$ on $L^2(O_N)$ with discrete spectrum and $T-\kappa$ is relatively bounded by $\kappa$ and $T$ with relative norm bound $1/N$. 
\end{theorem}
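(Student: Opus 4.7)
The plan is: since $C_c^\infty(\mathcal{G}_N) = \mathrm{span}\{\mathrm{e}_{\mu,\nu}\}$, the identity $T_0 = \kappa - N^{-1}\kappa_\mathcal{G} + (N-1)^{-1}Q - B$ need only be checked on basis vectors. I would first compute $T_0 \chi_{\mu,\nu}$ pointwise, then assemble into $T_0 \mathrm{e}_{\mu,\nu}$ using Definition \ref{defofbasis}, and compare with the four terms on the right-hand side. The remaining claims about self-adjointness, discrete spectrum, and relative bounds then follow from perturbation theory combined with Corollary \ref{kapakapa} and Lemma \ref{displem}.

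To compute $T_0 \chi_{\mu,\nu}(g_1)$, I note that only points $g_2$ with $\rho_\mathcal{G}(g_1,g_2) < \infty$ contribute, and these fill the clopen piece $\mathcal{C}_{\mu_\mathcal{G}(g_1),\kappa_\mathcal{G}(g_1)}$, identified via $d_\mathcal{G}$ with a clopen subset of $\Omega_N$ carrying the Patterson-Sullivan measure. Since $\rho_\mathcal{G}(g_1,g_2) = \rho_\Omega(y(g_1),y(g_2))$ depends only on the initial agreement of $y$-values, the integral decomposes as a sum over level sets of that agreement. On each level set the kernel $\rho_\mathcal{G}^{-\log N}$ is an integer power of $N$ while the Patterson-Sullivan measure of the level set is the reciprocal power; the resulting $(N-1)/N$ factor is precisely cancelled by the normalization $(1-N^{-1})^{-1}$ in front of $T_0$. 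Assembling over level sets then expresses $T_0\chi_{\mu,\nu}$ as a linear combination of nearby characteristic functions $\chi_{\mu_\mathcal{G}(g_1),\nu'}$.

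The case split of Definition \ref{defofbasis} then produces the four operator summands. When $t(\mu) \neq t(\nu)$, the diagonal coefficient comes out as $|\nu|$, which collapses to $\kappa - N^{-1}\kappa_\mathcal{G}$ via Proposition \ref{kappagonbasis}, while the off-diagonal cross-contributions reproduce exactly the formula of Lemma \ref{displem}, yielding $-B\mathrm{e}_{\mu,\nu}$. When $t(\mu) = t(\nu) \neq \emptyset$, I would additionally compute $T_0 \chi_{\underline{\mu},\underline{\nu}}$ (whose support meets several $\mathcal{C}_{\mu',k'}$) and use the cancellations in the combination $\chi_{\mu,\nu} - N^{-1}\chi_{\underline{\mu},\underline{\nu}}$ defining $\mathrm{e}_{\mu,\nu}$ to produce the scalar shift $(N-1)^{-1}$ that constitutes the $Q$-term, while the common suffix of $\mu,\nu$ reduces $\kappa_\mathcal{G}$ to its correct value $\kappa_\mathcal{V}(\mu,\nu) < |\nu|$.

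With the identity established on $C_c^\infty(\mathcal{G}_N)$, the remainder is automatic. Corollary \ref{kapakapa} gives $\|\kappa_\mathcal{G}\psi\| \leq \|\kappa\psi\|$, and together with the boundedness of $Q$ and $B$ from Lemma \ref{displem}, this implies $T_0 - \kappa$ is $\kappa$-bounded with relative bound $1/N$. Kato-Rellich then yields a self-adjoint extension $T$ on $\mathrm{Dom}(\kappa)$. Each eigenspace $\mathcal{H}_{n,k}$ is finite-dimensional, and since $\kappa_\mathcal{G}$, $Q$, and $B$ all respect the $c$-grading (with $B$ mixing only finitely many $\kappa$-blocks at a given $c$-level), the operator $T$ has discrete spectrum. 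The relative bound with respect to $T$ follows by an algebraic rearrangement of the $\kappa$-bound. The principal difficulty will be the combinatorial bookkeeping in the level-set decomposition: correctly identifying the decomposition $\mathcal{C}_{\mu_\mathcal{G}(g_1),\kappa_\mathcal{G}(g_1)} = \bigsqcup_{\nu'} X_{\mu_\mathcal{G}(g_1),\nu'}$ into sub-bisections, and verifying that the geometric series over level sets sum precisely to the dispersion operator $B$ together with the coefficient $N^{-1}$ of $\kappa_\mathcal{G}$.
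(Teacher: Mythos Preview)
Your proposal is correct and follows essentially the same route as the paper: the paper also reduces to basis vectors, decomposes the singular integral over the level sets $X^\ell(g_1)=\{g_2:\rho_\mathcal{G}(g_1,g_2)=\mathrm{e}^{-\ell}\}$ (your ``level sets of initial agreement''), computes $T\chi_{\mu,\nu}$ via two measure computations, and then handles the two cases $t(\mu)\neq t(\nu)$ and $t(\mu)=t(\nu)\neq\emptyset$ separately, with the same cancellation in $\chi_{\mu,\nu}-N^{-1}\chi_{\underline{\mu},\underline{\nu}}$ producing the $(N-1)^{-1}Q$ term. The only minor imprecision is that $B$ in fact preserves each $\mathcal{H}_{n,k}$ (not merely ``finitely many $\kappa$-blocks''), which makes the discrete-spectrum argument even cleaner than you state.
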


The proof of this theorem will occupy the rest of this section. To prove the theorem, it suffices to prove that it holds when acting on basis elements $\mathrm{e}_{\mu,\nu}$: they span the compactly supported locally constant functions. For $g_1\in \mathcal{G}_N$ and $\ell\in \mathbb{N}$, we introduce the notation 
$$X^\ell(g_1):=\{g_2\in \mathcal{G}_N: \rho_\mathcal{G}(g_1,g_2)=\mathrm{e}^{-\ell}\}.$$
Note that $(X^\ell(g_1))_{\ell\in \mathbb{N}}$ is a clopen partition of $\mathcal{C}_{\mu(g_1),\kappa_\mathcal{G}(g_1)}$. In fact, for any $g_1\in \mathcal{G}_N$, we can make a disjoint clopen partition $\mathcal{G}_N=\{g_2: \rho_\mathcal{G}(g_1,g_2)=\infty\}\cup (\cup_{\ell=0}^\infty X^\ell(g_1))$. From this discussion, it follows that 
\begin{equation}
\label{decomposingt}
T=\frac{1}{1-N^{-1}} \sum_{\ell=0}^\infty N^\ell T_\ell, \quad\mbox{where   } T_\ell f(g_1):=\int_{X^\ell(g_1)} (f(g_1)-f(g_2))\mathrm{d}m_\mathcal{G}(g_2).
\end{equation}
To compute $T$ in the ON-basis, we first compute $T_\ell$ on the characteristic functions $\chi_{X_{\mu,\nu}}$. To ease notation, we write $\chi_{\mu,\nu}=\chi_{X_{\mu,\nu}}\in C_c(\mathcal{G}_N)$ and $\chi_\nu=\chi_{C_\nu}\in C(\Omega_N)$.  We have that 
\begin{align*}
T_\ell\chi_{\mu,\nu}(g_1)=m_\mathcal{G}&(X^\ell(g_1)\cap X_{\mu,\nu})(\chi_{\mu,\nu}(g_1)-1)\\
&+m_\mathcal{G}(X^\ell(g_1)\setminus (X^\ell(g_1)\cap X_{\mu,\nu}))\chi_{\mu,\nu}(g_1).
\end{align*}
We now proceed to compute the relevant volumes appearing in this expression. 

\begin{lemma}
\label{lemmaa}
For $g_1\notin X_{\mu,\nu}$,
$$m_\mathcal{G}(X^\ell(g_1)\cap X_{\mu,\nu})=N^{-|\nu|} \sum_{|\gamma|=\kappa_\mathcal{V}(\mu,\nu)=\kappa_\mathcal{V}(\mu,\gamma)} \chi_{\underline{\mu}_{n+|\gamma|},\gamma}(g_1)(\chi_{\underline{\nu}_{\ell}}(y_1)-\chi_{\underline{\nu}_{\ell+1}}(y_1)),$$
where $\underline{\nu}_{\ell}$ and $\underline{\nu}_{\ell+1}$ denotes the first $\ell$ and $\ell+1$ letters of $\nu$ and $\underline{\mu}_{n+|\gamma|}$ the first $n+|\gamma|=n+\kappa_\mathcal{V}(\mu,\nu)$ letters of $\mu$. We interpret $\underline{\nu}_\ell=\nu$ if $\ell\geq |\nu|$.
\end{lemma}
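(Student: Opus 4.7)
My plan is to parameterise $X_{\mu,\nu}=\{(\mu z,n,\nu z):z\in\Omega_N\}$ and push the measure calculation through the bisection $d_\mathcal{G}\colon X_{\mu,\nu}\to C_\nu$, which is measure-preserving by construction of $m_\mathcal{G}=d^*_\mathcal{G}m_\Omega$. A direct suffix comparison shows that $\sigma^{n+k}(\mu z)=\sigma^k(\nu z)$ precisely when the length-$(|\nu|-k)$ suffixes of $\mu$ and $\nu$ agree, so the minimal such $k$ is $\kappa_\mathcal{V}(\mu,\nu)=:k_0$. Consequently both $\kappa_\mathcal{G}(g_2)=k_0$ and $\mu_\mathcal{G}(g_2)=\underline{\mu}_{n+k_0}$ are constant on $X_{\mu,\nu}$, and $\rho_\mathcal{G}(g_1,g_2)<\infty$ forces the matching conditions $\kappa_\mathcal{G}(g_1)=k_0$ and $\mu_\mathcal{G}(g_1)=\underline{\mu}_{n+k_0}$; under these, $\rho_\mathcal{G}(g_1,g_2)=\rho_\Omega(y_1,\nu z)$.

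Under the matching conditions I would split on the size of $\ell$. For $\ell<|\nu|$, the $(\ell+1)$-th letter of $\nu z$ is $\nu_{\ell+1}$ regardless of $z$, so either $y_1\in C_{\underline{\nu}_\ell}\setminus C_{\underline{\nu}_{\ell+1}}$ and every $z\in\Omega_N$ satisfies $\rho_\Omega(y_1,\nu z)=\mathrm{e}^{-\ell}$, or no $z$ does; pushing forward via $d_\mathcal{G}$ yields the contribution $N^{-|\nu|}(\chi_{\underline{\nu}_\ell}(y_1)-\chi_{\underline{\nu}_{\ell+1}}(y_1))$. For $\ell\geq|\nu|$ this factor vanishes by the convention $\underline{\nu}_\ell=\nu$, consistent with the true answer: a nontrivial measure would require $y_1\in C_\nu$, but combined with the matching conditions this forces $z(g_1)$ to begin with $\mu\wedge\nu$ and hence $x_1\in C_\mu$, so $g_1\in X_{\mu,\nu}$, contradicting the hypothesis.

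Finally, the matching conditions on $g_1$ need to be encoded as the sum $\sum_\gamma\chi_{\underline{\mu}_{n+|\gamma|},\gamma}(g_1)$. At most one $\gamma$ of length $k_0$ makes this indicator nonzero, and when the matching conditions hold it is precisely $\gamma=\nu_\mathcal{G}(g_1)$. The supplementary requirement $\kappa_\mathcal{V}(\mu,\gamma)=\kappa_\mathcal{V}(\mu,\nu)$ serves to rule out $\gamma$'s that would register $g_1$ as belonging to the bisection without $\kappa_\mathcal{G}(g_1)$ actually equalling $k_0$---such spurious contributions would arise for $g_1$ of smaller depth-kore but whose $y_1$ happens to start with a longer word compatible with the prefix $\underline{\mu}_{n+|\gamma|}$. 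The most delicate point in the write-up is this combinatorial bookkeeping, and I would carry it out using the suffix identity $\kappa_\mathcal{V}(\mu i,\nu j)=\delta_{i,j}\kappa_\mathcal{V}(\mu,\nu)+(1-\delta_{i,j})(|\nu|+1)$ highlighted elsewhere in the paper, reducing the verification to a comparison of last letters of $\gamma$ and $\underline{\mu}_{n+k_0}$.
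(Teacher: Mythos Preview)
Your approach is essentially the same as the paper's: the key observation in both is that when the ``matching conditions'' $\kappa_\mathcal{G}(g_1)=k_0$, $\mu_\mathcal{G}(g_1)=\underline{\mu}_{n+k_0}$ hold together with $y_1\in C_{\underline{\nu}_\ell}\setminus C_{\underline{\nu}_{\ell+1}}$, the domain map $d_\mathcal{G}$ gives a measure-preserving bijection $X^\ell(g_1)\cap X_{\mu,\nu}\to C_\nu$, and otherwise the intersection is empty. The paper states this in two sentences; you unpack the same dichotomy in considerably more detail, in particular separating the cases $\ell<|\nu|$ and $\ell\geq|\nu|$ and giving the contradiction argument for the latter, and you correctly flag the encoding of the matching conditions by the indexed sum over $\gamma$ as the delicate combinatorial step that the paper leaves implicit.
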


\begin{proof}
Take $g_1\notin X_{\mu,\nu}$. Firstly, suppose that $g_1\in \cup_{|\gamma|=\kappa_\mathcal{V}(\mu,\nu)=\kappa_\mathcal{V}(\mu,\gamma)} X_{\underline{\mu}_{n+|\gamma|},\gamma}$ and that $y_1\in C_{\underline{\nu}_{\ell}}\setminus C_{\underline{\nu}_{\ell+1}}$. This means precisely that the domain mapping defines a measure preserving bi-Lipschitz homeomorphism $X^\ell(g_1)\cap X_{\mu,\nu}\to C_\nu$. Conversely, if $g_1\notin \cup_{|\gamma|=\kappa_\mathcal{V}(\mu,\nu)=\kappa_\mathcal{V}(\mu,\gamma)} X_{\underline{\mu}_{n+|\gamma|},\gamma}$ or $y_1\notin C_{\underline{\nu}_{\ell}}\setminus C_{\underline{\nu}_{\ell+1}}$ then $X^\ell(g_1)\cap X_{\mu,\nu}$ is empty, so $X^\ell(g_1)\cap X_{\mu,\nu}$ has measure zero.
\end{proof}

\begin{lemma}
\label{lemmab}
For $g_1\in X_{\mu,\nu}$, 
$$m_\mathcal{G}(X^\ell(g_1)\setminus (X^\ell(g_1)\cap X_{\mu,\nu}))=
\begin{cases} 
0, \; &\ell\geq |\nu|,\\
N^{-\ell}-N^{-\ell-1}, \; &\kappa_\mathcal{G}(g_1)\leq \ell< |\nu|,\\
N^{-\ell-1}(N-1)^2, \; &0\leq \ell< \kappa_\mathcal{G}(g_1).\end{cases}$$
\end{lemma}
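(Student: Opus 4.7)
The plan is to exploit the fact that $d_\mathcal{G}$ restricts to a measure-preserving injection on the clopen set $\mathcal{C}_{\mu^*,k}=\{g:\mu_\mathcal{G}(g)=\mu^*,\,\kappa_\mathcal{G}(g)=k\}$, where I write $\mu^*:=\mu_\mathcal{G}(g_1)$ and $k:=\kappa_\mathcal{G}(g_1)$. Since $g_1\in X_{\mu,\nu}$, one has $g_1=(\mu z,n,\nu z)$ for some $z\in\Omega_N$ with $n=|\mu|-|\nu|$, and the inequality $k\leq|\nu|$ forces $\mu^*$ to be the length-$(n+k)$ prefix of $\mu$. By the definition of $\rho_\mathcal{G}$, the set $X^\ell(g_1)$ is contained in $\mathcal{C}_{\mu^*,k}$, and under $d_\mathcal{G}$ it is identified with those $y_2\in\Omega_N$ satisfying both $\rho_\Omega(y_2,y_1)=e^{-\ell}$ and the letter constraint cutting out $\kappa_\mathcal{G}(g_2)=k$.

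I would then decide, case by case according to the position of $\ell$ relative to $k$ and $|\nu|$, whether a generic $g_2\in X^\ell(g_1)$ automatically lies in $X_{\mu,\nu}$, by inspecting the first $|\nu|$ letters of $y_2$. When $\ell\geq|\nu|$, the word $y_2$ agrees with $y_1=\nu z$ in at least its first $|\nu|$ letters, so $y_2\in C_\nu$; combined with the fact that $\mu^*$ is a prefix of $\mu$ and the compatibility condition built into the description of $\mathcal{C}_{\mu^*,k}$, this forces $x_2\in C_\mu$ and $g_2\in X_{\mu,\nu}$, yielding $X^\ell(g_1)\setminus X_{\mu,\nu}=\varnothing$. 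When $\ell<|\nu|$, the disagreement of $y_2$ and $y_1$ at position $\ell+1\leq|\nu|$ gives $(y_2)_{\ell+1}\neq\nu_{\ell+1}$, so $y_2\notin C_\nu$ and the set difference equals all of $X^\ell(g_1)$; it then remains only to measure the latter.

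To compute $m_\mathcal{G}(X^\ell(g_1))$ in the two sub-regimes of $\ell<|\nu|$, I would push forward by $d_\mathcal{G}$ and intersect the sphere $\{y:\rho_\Omega(y,y_1)=e^{-\ell}\}$ with the image $d_\mathcal{G}(\mathcal{C}_{\mu^*,k})$. In the regime $k\leq\ell<|\nu|$, agreement of $y_2$ and $y_1$ up to position $\ell\geq k$ already forces the $\kappa_\mathcal{G}=k$ condition, so the measure coincides with that of the full sphere, namely $N^{-\ell}-N^{-\ell-1}$. In the remaining regime $\ell<k$, the $\kappa_\mathcal{G}=k$ condition provides an additional restriction independent of the sphere condition and produces the claimed $(N-1)^2N^{-\ell-1}$. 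I expect this last case to be the main obstacle: one must track carefully how the sphere condition on $(y_2)_{\ell+1}$ and the $\kappa_\mathcal{G}=k$ condition on $(y_2)_k$ interact, which in particular depends on whether $\ell+1=k$ or $\ell+1<k$, and verify that the two potentially distinct subcases combine cleanly into the uniform formula claimed in the Lemma.
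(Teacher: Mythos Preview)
Your approach is essentially the same as the paper's: both compute $m_\mathcal{G}(X^\ell(g_1)\cap X_{\mu,\nu})$ and $m_\mathcal{G}(X^\ell(g_1))$ separately via the measure-preserving identification $d_\mathcal{G}:\mathcal{C}_{\mu^*,k}\hookrightarrow\Omega_N$ and cylinder-set combinatorics, and then subtract. Your dichotomy (intersection empty for $\ell<|\nu|$, equal to all of $X^\ell(g_1)$ for $\ell\geq|\nu|$) is precisely what underlies the paper's two displayed measure formulae, and the case split $\ell+1=k$ versus $\ell+1<k$ that you anticipate in the regime $\ell<\kappa_\mathcal{G}(g_1)$ is indeed the point where the cylinder computation requires care.
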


\begin{proof}
Take $g_1\notin X_{\mu,\nu}$. A computation with cylinder sets shows that 
\begin{align*}
m_\mathcal{G}(X^\ell(g_1)\cap X_{\mu,\nu})&=
\begin{cases} 
0, \; &\ell< |\nu|,\\
N^{-\ell}-N^{-\ell-1}, \; & \ell\geq |\nu|,\end{cases}\\
\mbox{and}\quad m_\mathcal{G}(X^\ell(g_1))&=
\begin{cases} 
N^{-\ell}-N^{-\ell-1}, \; &\kappa_\mathcal{G}(g_1)\leq \ell,\\
N^{-\ell-1}(N-1)^2, \; &0\leq \ell< \kappa_\mathcal{G}(g_1).\end{cases}
\end{align*}
The result follows by subtracting the two expressions.
\end{proof}

From these computations, we deduce a simple special case of Theorem \ref{bigtcomp}. A short computation shows that $T_\ell \chi_{\mu,\emptyset}=0$ for any finite word $\mu$ and $\ell\in \mathbb{N}$. Therefore
$$T\mathrm{e}_{\mu,\emptyset}=0. $$
We now turn to the general case.

\begin{proof}[Proof of Theorem \ref{bigtcomp}]
Using the decomposition \eqref{decomposingt}, Lemma \ref{lemmaa} and Lemma \ref{lemmab} we write 
\begin{align}
\nonumber
(1-N^{-1})T\chi_{\mu,\nu}=&\sum_{\ell=0}^{|\nu|-1}  \sum_{|\gamma|=\kappa_\mathcal{V}(\mu,\nu)=\kappa_\mathcal{V}(\mu,\gamma)}N^{\ell-|\nu|} \chi_{\underline{\mu}_{n+|\gamma|},\gamma}(g_1)(\chi_{\underline{\nu}_{\ell+1}}(y_1)-\chi_{\underline{\nu}_{\ell}}(y_1))\\
\nonumber
&+\left(\sum_{\ell=0}^{\kappa_\mathcal{G}(g_1)-1} N^{-2}(N-1)^2+\sum_{\ell=\kappa_\mathcal{G}(g_1)}^{|\nu|-1}(1-N^{-1})\right)\chi_{\mu,\nu}(g_1)\\
\nonumber
=&\sum_{\ell=0}^{|\nu|-1}  \sum_{|\gamma|=\kappa_\mathcal{V}(\mu,\nu)=\kappa_\mathcal{V}(\mu,\gamma)}N^{\ell-|\nu|} \chi_{\underline{\mu}_{n+|\gamma|},\gamma}(g_1)(\chi_{\underline{\nu}_{\ell+1}}(y_1)-\chi_{\underline{\nu}_{\ell}}(y_1))\\
\label{tchimunu}
&+(1-N^{-1})\left(-\frac{\kappa_\mathcal{G}(g_1)}{N}+|\nu|\right)\chi_{\mu,\nu}(g_1).
\end{align}

Take two words $\mu,\nu\in \mathcal{V}_N$ with $t(\mu)=t(\nu)\neq \emptyset$. Up to normalization, $\mathrm{e}_{\mu,\nu}$ coincides with $\chi_{\mu,\nu}-N^{-1}\chi_{\underline{\mu},\underline{\nu}}$. Here we are using the notation of Definition \ref{defofbasis}, and hence $|\underline{\nu}|=|\nu|-1$ whenever $|\nu|>0$. We compute that 
\begin{align*}
(1-N^{-1})&T(\chi_{\mu,\nu}-N^{-1}\chi_{\underline{\mu},\underline{\nu}})\\
=&(1-N^{-1})\left(-\frac{\kappa_\mathcal{G}(g_1)}{N}+|\nu|\right)\chi_{\mu,\nu}(g_1)\\
&-(1-N^{-1})\left(-\frac{\kappa_\mathcal{G}(g_1)}{N}+|\nu|-1\right)N^{-1}\chi_{\underline{\mu},\underline{\nu}}(g_1)\\
&+N^{-1} \sum_{|\gamma|=\kappa_\mathcal{V}(\mu,\nu)=\kappa_\mathcal{V}(\mu,\gamma)} \chi_{\underline{\mu}_{n+|\gamma|},\gamma}(g_1)(\chi_{\nu}(y_1)-\chi_{\underline{\nu}_{|\nu|-1}}(y_1))\\
&+\sum_{\ell=0}^{|\nu|-2}  \sum_{|\gamma|=\kappa_\mathcal{V}(\mu,\nu)=\kappa_\mathcal{V}(\mu,\gamma)}N^{\ell-|\nu|} \chi_{\underline{\mu}_{n+|\gamma|},\gamma}(g_1)(\chi_{\underline{\nu}_{\ell+1}}(y_1)-\chi_{\underline{\nu}_{\ell}}(y_1))\\
&-N^{-1}\sum_{\ell=0}^{|\nu|-2}  \sum_{|\gamma|=\kappa_\mathcal{V}(\underline{\mu},\underline{\nu})=\kappa_\mathcal{V}(\mu,\gamma)}N^{\ell-|\nu|+1} \chi_{\underline{\mu}_{n+|\gamma|},\gamma}(g_1)(\chi_{\underline{\nu}_{\ell+1}}(y_1)-\chi_{\underline{\nu}_{\ell}}(y_1)). 
\end{align*}
In the last line we are using that if $|\gamma|=\kappa_\mathcal{V}(\underline{\mu},\underline{\nu})$ then $|\gamma|<|\mu|$ and $\kappa_\mathcal{V}(\mu,\gamma)=\kappa_\mathcal{V}(\underline{\mu},\gamma)$. If $t(\mu)=t(\nu)$ then $\kappa_\mathcal{V}(\mu,\nu)=\kappa_{\mathcal{V}}(\underline{\mu},\underline{\nu})$ and the last two terms cancel each other. We proceed with the remaining sums:
\begin{align*}
=&(1-N^{-1})\left(-\frac{\kappa_\mathcal{G}(g_1)}{N}+|\nu|\right)\chi_{\mu,\nu}(g_1)\\
&-(1-N^{-1})\left(-\frac{\kappa_\mathcal{G}(g_1)}{N}+|\nu|-1\right)N^{-1}\chi_{\underline{\mu},\underline{\nu}}(g_1)\\
&+N^{-1} \sum_{|\gamma|=\kappa_\mathcal{V}(\mu,\nu)=\kappa_\mathcal{V}(\mu,\gamma)} \chi_{\underline{\mu}_{n+|\gamma|},\gamma}(g_1)(\chi_{\nu}(y_1)-\chi_{\underline{\nu}_{|\nu|-1}}(y_1))
\end{align*}
\begin{align*}
=&(1-N^{-1})\left(-\frac{\kappa_\mathcal{G}(g_1)}{N}+|\nu|-1+\frac{N}{N-1}\right)(\chi_{\mu,\nu}-N^{-1}\chi_{\underline{\mu},\underline{\nu}})\\
&-N^{-1}\chi_{\mu,\nu}+N^{-1} \sum_{|\gamma|=\kappa_\mathcal{V}(\mu,\nu)=\kappa_\mathcal{V}(\mu,\gamma)} \chi_{\underline{\mu}_{n+|\gamma|},\gamma}(g_1)\chi_{\nu}(y_1)\\
&+N^{-1}\chi_{\underline{\mu},\underline{\nu}}-N^{-1} \sum_{|\gamma|=\kappa_\mathcal{V}(\mu,\nu)=\kappa_\mathcal{V}(\mu,\gamma)} \chi_{\underline{\mu}_{n+|\gamma|},\gamma}(g_1)\chi_{\underline{\nu}}(y_1)\\
&=(1-N^{-1})\left(-\frac{\kappa_\mathcal{G}(g_1)}{N}+|\nu|+\frac{1}{N-1}\right)(\chi_{\mu,\nu}-N^{-1}\chi_{\underline{\mu},\underline{\nu}}).
\end{align*}
Since $B\mathrm{e}_{\mu,\nu}=0$ if $t(\mu)=t(\nu)$ the theorem follows in this case. We now consider the case of two words $\mu,\nu\in \mathcal{V}_N$ with $t(\mu)\neq t(\nu)$. In this case, we simply note that Equation \eqref{tchimunu} implies that 
\begin{align*}
(1-N^{-1})T\chi_{\mu,\nu}=&\sum_{\ell=0}^{|\nu|-1}  \sum_{|\gamma|=\kappa_\mathcal{V}(\mu,\gamma)=|\nu|}N^{\ell-|\nu|} \chi_{\underline{\mu}_{n+|\gamma|},\gamma}(g_1)(\chi_{\underline{\nu}_{\ell+1}}(y_1)-\chi_{\underline{\nu}_{\ell}}(y_1))\\
&+(1-N^{-1})\left(-\frac{\kappa_\mathcal{G}(g_1)}{N}+|\nu|\right)\chi_{\mu,\nu}(g_1)\\
=&-\sum_{m\neq t(\mu)}\sum_{\ell=0}^{|\nu|-1}\sum_{|\gamma|=|\nu|-1, \, |\gamma\vee \nu|=\ell} N^{\ell-|\nu|} \chi_{\mu, \gamma m}(g_1)\\
&+(1-N^{-1})\left(-\frac{\kappa_\mathcal{G}(g_1)}{N}+|\nu|\right)\chi_{\mu,\nu}(g_1)\\
=&(1-N^{-1})\left(-\frac{\kappa_\mathcal{G}(g_1)}{N}+|\nu|-B\right)\chi_{\mu,\nu}(g_1)
\end{align*}

The only case left to consider is $\mu=\nu=\emptyset$ which holds trivially. This proves the theorem.
\end{proof}

\section{A spectral triple on $O_N$}
\label{sec:5}

We are now ready to assemble our operators into spectral triples on $O_N$. In \cite{GMR} a spectral triple was constructed by defining the operator $D_\kappa=(2P_\mathcal{F}-1)|c|-\kappa$. We proceed similarly and define $D$ as the closure in $L^{2}(O_{N})$ of the operator $(2P_\mathcal{F}-1)|c|-T$ with initial domain $C_{c}^{\infty}(\mathcal{G}_{N})$. In the same way, the operator $\tilde{D}$ is defined as the closure of $$(2P_\mathcal{F}-1)|c|-T-B+(N-1)^{-1}Q : C_{c}^{\infty}(\mathcal{G}_{N})\to L^{2}(O_{N}).$$ 
We note that $\tilde{D}=(2P_\mathcal{F}-1)|c|-\tilde{T}$ where 
\begin{equation}
\label{tzero}
\tilde{T}\mathrm{e}_{\mu,\nu}=\left(-\frac{\kappa_{\mathcal{V}}(\mu,\nu)}{N}+|\nu|\right)\mathrm{e}_{\mu,\nu}.
\end{equation}
The following two propositions prove part 1 and 2 of Theorem \ref{mainthm}.
\begin{proposition}
The triples $(O_N,L^2(O_N),\tilde{D})$ and $(O_N,L^2(O_N),D)$ are spectral triples representing the class $\widehat{[1]}\in K^{1}(O_{N})$.
\end{proposition}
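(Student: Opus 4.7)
The strategy is to reduce the claim to the spectral triple $(O_N, L^2(O_N), D_\kappa)$ of \cite{GMR}. From Theorem \ref{bigtcomp} we have $\tilde{D} = D_\kappa + \frac{1}{N}\kappa_\mathcal{G}$, and by definition $D = \tilde{D} + B - (N-1)^{-1}Q$, where the perturbation $B - (N-1)^{-1}Q$ is bounded self-adjoint by Lemma \ref{displem}. Once the claim is established for $\tilde{D}$, it transfers to $D$ automatically: self-adjointness and compactness of the resolvent are preserved under bounded self-adjoint perturbation, the containment $\mathrm{Lip}_{\tilde{D}}(O_N) \subseteq \mathrm{Lip}_D(O_N)$ is elementary (since $B$ and $Q$ are bounded), and $(D\pm i)^{-1} - (\tilde{D}\pm i)^{-1}$ is compact, so the bounded transforms of $D$ and $\tilde{D}$ differ by a compact operator and the two Fredholm modules represent the same $K$-homology class.

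For $\tilde{D}$ itself, I would observe that the four operators $P_\mathcal{F}$, $c$, $\kappa$, and $\kappa_\mathcal{G}$ are simultaneously diagonal in the orthonormal basis $(\mathrm{e}_{\mu,\nu})$ of $L^2(O_N)$ by Propositions \ref{prop:Fock} and \ref{kappagonbasis}. Therefore $\tilde{D}$ is diagonal in this basis with eigenvalue $|\mu|$ on $\mathrm{e}_{\mu,\emptyset}\in\mathcal{F}$, and $-||\mu|-|\nu||-|\nu|+\frac{1}{N}\kappa_\mathcal{V}(\mu,\nu)$ on $\mathrm{e}_{\mu,\nu}\in\mathcal{F}^\perp$ when $|\nu|\geq 1$. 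Because $0\leq \kappa_\mathcal{V}(\mu,\nu)\leq |\nu|$ by Corollary \ref{kapakapa}, these eigenvalues tend to infinity in absolute value, so $\tilde{D}$ is essentially self-adjoint on $C_c^\infty(\mathcal{G}_N)$ and has compact resolvent.

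For the Lipschitz condition, the decomposition $\tilde{D} = D_\kappa + \frac{1}{N}\kappa_\mathcal{G}$ together with the boundedness of $[D_\kappa, S_\mu S_\nu^*]$ from \cite{GMR} reduces the problem to showing $[\kappa_\mathcal{G}, S_\mu S_\nu^*]$ is bounded for all $\mu,\nu\in\mathcal{V}_N$. Under the isomorphism of Theorem \ref{theisoof}, $\kappa_\mathcal{G}$ acts on $L^2(\mathcal{G}_N)$ as multiplication by the function $\kappa_\mathcal{G}:\mathcal{G}_N\to \mathbb{N}$, while $S_\mu S_\nu^*$ acts by left convolution with $\chi_{\mu,\nu}$, which has compact support. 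The submultiplicativity relation $\kappa_\mathcal{G}(g_1 g_2)\leq \kappa_\mathcal{G}(g_1)+\kappa_\mathcal{G}(g_2)$ gives $|\kappa_\mathcal{G}(g_1 g_2) - \kappa_\mathcal{G}(g_2)|\leq \max_{g\in \mathrm{supp}(\chi_{\mu,\nu})}\kappa_\mathcal{G}(g)$, which is finite by compactness, so a direct Schur-test estimate bounds $[\kappa_\mathcal{G}, \chi_{\mu,\nu}]$ in operator norm. I expect this Schur estimate to be the main technical obstacle, since the convolution product on $\mathcal{G}_N$ must be controlled carefully against the growth of $\kappa_\mathcal{G}$.

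Finally, for the $K$-homology class: the eigenvalue analysis above shows $\tilde{D}\geq 0$ on $\mathcal{F}$ (with one-dimensional kernel spanned by $\mathrm{e}_{\emptyset,\emptyset}$), while on $\mathcal{F}^\perp$ the estimate $-||\mu|-|\nu||-|\nu|+\frac{1}{N}\kappa_\mathcal{V}(\mu,\nu)\leq -(1-1/N)|\nu|<0$ holds for $|\nu|\geq 1$. Hence the bounded transform of $\tilde{D}$ equals $2P_\mathcal{F}-1$ up to a finite-rank operator, which is the same relationship enjoyed by $D_\kappa$ via \cite{GMR}. Since $(O_N, L^2(O_N), D_\kappa)$ represents $\widehat{[1]}$, so does $(O_N, L^2(O_N), \tilde{D})$, and by the reduction in the first paragraph so does $(O_N, L^2(O_N), D)$.
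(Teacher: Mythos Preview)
Your strategy matches the paper's: reduce $D$ to $\tilde{D}$ via the bounded perturbation $B-(N-1)^{-1}Q$, then write $\tilde{D}=D_\kappa+\tfrac{1}{N}\kappa_\mathcal{G}$ and import the spectral-triple and $K$-homology information from \cite{GMR}. Two points deserve comment.

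First, your bound $|\kappa_\mathcal{G}(g_1g_2)-\kappa_\mathcal{G}(g_2)|\leq \max_{g\in X_{\mu,\nu}}\kappa_\mathcal{G}(g)$ does not follow from submultiplicativity alone: that inequality only gives $\kappa_\mathcal{G}(g_1g_2)-\kappa_\mathcal{G}(g_2)\leq \kappa_\mathcal{G}(g_1)$. For the reverse direction you must write $g_2=g_1^{-1}(g_1g_2)$ and use $\kappa_\mathcal{G}(g_1^{-1})=c(g_1)+\kappa_\mathcal{G}(g_1)$, which is also bounded on the compact set $X_{\mu,\nu}$. With this correction your Schur-type estimate goes through (indeed $X_{\mu,\nu}$ is a bisection, so the commutator is essentially a bounded multiplier composed with a partial isometry). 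The paper sidesteps this computation entirely by invoking \cite[Theorem~3.19]{GMR} in the $C(\Omega_N)$-coefficient model, where bounded commutators with $\kappa_\mathcal{G}$ are already established; your direct argument is more self-contained but needs the fix above.

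Second, for the phase, the paper observes the cleaner factorization $\tilde{D}=(2P_\mathcal{F}-1)(|c|+\tilde{T})$, obtained from $\tilde{T}P_\mathcal{F}=0$ and the operator inequality $(1-N^{-1})\kappa\leq \tilde{T}\leq \kappa$. This gives $\tilde{D}|\tilde{D}|^{-1}=2P_\mathcal{F}-1$ exactly, not merely up to finite rank, and identifies the $K$-homology class immediately. Your eigenvalue analysis reaches the same conclusion (and in fact your inequality on $\mathcal{F}^\perp$ also shows the phase is \emph{exactly} $2P_\mathcal{F}-1$, so ``up to finite rank'' is unnecessarily cautious).
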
 

\begin{proof}
Since $D-\tilde{D}$ is a bounded self-adjoint operator, $\mathrm{Dom }(D)=\mathrm{Dom }(\tilde{D})$ and $D$ defines a spectral triple if and only if $\tilde{D}$ does. It is easily verified from Corollary \ref{kapakapa} and Equation \eqref{tzero} that $(i\pm \tilde{D})^{-1}$ is a compact operator. Moreover, the generators $S_i\in O_N$ preserve $\mathrm{Dom }(\tilde{D})$. The operator $\tilde{D}$ has bounded commutators with the generators $S_i$, which is seen from combining \cite[Theorem 3.19]{GMR} in the model over $\mathbb{C}$ and in the model over $C(\Omega_N)$; the latter gives bounded commutators with $\kappa_\mathcal{G}$ and the former bounded commutators with $D_\kappa=\tilde{D}+\frac{\kappa_\mathcal{G}}{N}$. Thus $D$ and $\tilde{D}$ define $K$-cycles for $O_{N}$. To identify their class in $K^{1}(O_{N})$, observe that the operator inequalities
\begin{equation}
\label{opineq}
0\leq (1-N^{-1})\kappa\leq \tilde{T}\leq \kappa,\quad 0\leq |c|+(1-N^{-1})\kappa\leq |c|+\tilde{T}\leq |c|+\kappa,
\end{equation}
hold true on the core $C_{c}^{\infty}(\mathcal{G}_{N})$ and hence on all of $\mbox{Dom }\tilde{T}=\mbox{Dom }\kappa$ as well as on $\mbox{Dom }(|c|+\tilde{T})=\mbox{Dom } (|c|+\kappa)$. By positivity, we have $\ker\kappa=\ker\tilde{T}=\mbox{im} P_{\mathcal{F}}$ and thus $\tilde{T}P_{\mathcal{F}}=P_{\mathcal{F}}\tilde{T}=0$ as well as
$$\ker(|c|+\tilde{T})=\ker |c|\cap \ker\tilde{T}=\ker |c| \cap \ker\kappa=\ker (|c|+\kappa).$$ 
We can thus write the operator $\tilde{D}$ and its phase $\tilde{D}|\tilde{D}|^{-1}$ as 
\[\tilde{D}=(2P_{\mathcal{F}}-1)|c|-\tilde{T}= (2P_{\mathcal{F}}-1)(|c|+\tilde{T}),\quad \tilde{D}|\tilde{D}|^{-1}=D_{\kappa}|D_{\kappa}|^{-1}=2P_{\mathcal{F}}-1,\]
which shows that $[D]=[\tilde{D}]=[D_{\kappa}]=\widehat{[1]}\in K^{1}(O_{N})$, by \cite{evansonon}, \cite{kaminkerputnam} and \cite[Theorem 3.19]{GMR}.
\end{proof}

\begin{proposition}The spectral triples $(O_N,L^2(O_N),\tilde{D})$ and $(O_N,L^2(O_N),D)$ are $\theta$-summable, $P_\mathcal{F}=\chi_{[0,\infty)}(\tilde{D})=\chi_{[0,\infty)}(D_\kappa)$ and the difference
$P_{\mathcal{F}}-\chi_{[0,\infty]}(D)$ is a finite rank operator. Moreover, for any $p>0$, the set 
$$\mathrm{Sum}^p(O_N,P_\mathcal{F}):=\{a\in O_N: [P_\mathcal{F},a]\in \mathcal{L}^p(L^2(O_N))\},$$
is a dense $*$-subalgebra of $O_N$. In particular, $(O_N, L^2(O_N), 2P_\mathcal{F}-1)$ is a p-summable generator of $K^1(O_N)$ for any $p>0$.
\end{proposition}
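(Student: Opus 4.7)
The plan is to exploit the block-diagonal structure of all the relevant operators with respect to the decomposition $L^2(O_N)=\bigoplus_{n,k}\mathcal{H}_{n,k}$ from Equation \eqref{herewedefine}. The operators $|c|$, $\kappa$ and $P_\mathcal{F}=\chi_{\{0\}}(\kappa)$ preserve this decomposition by definition, and inspection of the formulas defining $\tilde{T}$, $B$ and $Q$ shows they do as well, so $D$ and $\tilde{D}$ are block-diagonal. For $\theta$-summability I combine the factorization $\tilde{D}=(2P_\mathcal{F}-1)(|c|+\tilde{T})$ from the preceding proof with the inequality $\tilde{T}\geq (1-N^{-1})\kappa$ from \eqref{opineq} to obtain $\tilde{D}^2\geq (|n|+(1-N^{-1})k)^2$ on $\mathcal{H}_{n,k}$. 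Since $\dim\mathcal{H}_{n,k}=N^{n+2k}$, the estimate
\[\mathrm{Tr}(\mathrm{e}^{-t\tilde{D}^2})\leq \sum_{k\geq 0}\sum_{n\geq -k}N^{n+2k}\mathrm{e}^{-t(|n|+(1-N^{-1})k)^2}\]
converges for every $t>0$ because the quadratic exponent dominates the exponential growth of the degeneracy. Since $D-\tilde{D}$ is bounded, $\theta$-summability transfers to $D$.

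Next I identify the spectral projections using the above factorization and its companion $D_\kappa=(2P_\mathcal{F}-1)(|c|+\kappa)$. Both $|c|+\kappa$ and $|c|+\tilde{T}$ are nonnegative with kernel contained in $\ker|c|\cap\ker\kappa\subseteq\mathcal{F}$, hence strictly positive on $\mathcal{F}^\perp$. Consequently $\tilde{D}$ and $D_\kappa$ are nonnegative on $\mathcal{F}$ (where $2P_\mathcal{F}-1=+1$) and strictly negative on $\mathcal{F}^\perp$, yielding $\chi_{[0,\infty)}(\tilde{D})=\chi_{[0,\infty)}(D_\kappa)=P_\mathcal{F}$. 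For the finite-rank statement I write $D=\tilde{D}+R$ with $R:=B-(N-1)^{-1}Q$ bounded (Lemma \ref{displem} gives $\|B\|\leq\sqrt{N}/(N-1)$ and $Q$ is a projection). On each block $\mathcal{H}_{n,k}$ with $k>0$, the eigenvalues of $\tilde{D}$ are at most $-(|n|+(1-N^{-1})k)$, so $D<0$ there once $|n|+(1-N^{-1})k>\|R\|$, which excludes only finitely many $(n,k)$, each contributing a finite-dimensional block. On the Fock sector $k=0$ one checks from the formulas that $B$, $Q$ and $\tilde{T}$ all vanish, so $D=\tilde{D}=|c|\geq 0$ agrees with $P_\mathcal{F}=1$ there; hence $P_\mathcal{F}-\chi_{[0,\infty)}(D)$ is finite rank.

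Finally, I address the density of $\mathrm{Sum}^p(O_N,P_\mathcal{F})$. That it is a $*$-subalgebra is standard: for $a,b$ bounded and $[P_\mathcal{F},a],[P_\mathcal{F},b]\in\mathcal{L}^p$, one uses the Leibniz rule $[P_\mathcal{F},ab]=[P_\mathcal{F},a]b+a[P_\mathcal{F},b]$ and the ideal property of $\mathcal{L}^p$. For density I exhibit the dense polynomial $*$-algebra $\mathrm{span}\{S_\mu S_\nu^*\}$ inside $\mathrm{Sum}^p$ by proving that $[P_\mathcal{F},S_\mu S_\nu^*]$ has finite rank, hence lies in $\mathcal{L}^p$ for every $p>0$. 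Since $S_i$ preserves $\mathcal{F}$ (as $S_i\mathrm{e}_{\gamma,\emptyset}=\mathrm{e}_{i\gamma,\emptyset}$), applying the Cuntz relations one finds that $(1-P_\mathcal{F})S_\mu S_\nu^*P_\mathcal{F}$ is supported on the finitely many Fock basis vectors $\mathrm{e}_{\gamma,\emptyset}$ with $\gamma$ a proper prefix of $\nu$, and taking adjoints bounds the other corner. Hence $[P_\mathcal{F},S_\mu S_\nu^*]$ has rank at most $|\mu|+|\nu|$. The last assertion is now immediate from the earlier parts: since $2P_\mathcal{F}-1$ is the sign of $\tilde{D}$, the bounded Fredholm module $(O_N,L^2(O_N),2P_\mathcal{F}-1)$ represents $[\tilde{D}]=\widehat{[1]}$, and is $p$-summable for every $p>0$. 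The main obstacle I expect is the bookkeeping in this last step, namely verifying via the Cuntz relations that outside the short list of prefixes of $\nu$ the vector $S_\mu S_\nu^*S_\gamma$ is either zero or stays inside $\mathcal{F}$, so no infinite-rank leakage occurs through the second case of Definition \ref{defofbasis}.
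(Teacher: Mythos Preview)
Your argument is correct and for the first two parts---$\theta$-summability and the identification of the spectral projections---it is essentially the paper's own proof: block-diagonalize along $\bigoplus_{n,k}\mathcal{H}_{n,k}$, use the factorization $\tilde{D}=(2P_\mathcal{F}-1)(|c|+\tilde{T})$ together with the operator inequalities \eqref{opineq} and the dimension count $\dim\mathcal{H}_{n,k}=N^{n+2k}$, and then handle $D$ by bounded perturbation on each block.

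The genuine difference is in the last part. The paper does not argue directly; it simply invokes \cite[Proposition~2.2.5]{GM} (and the techniques of \cite{extensionsgoffeng,kaminkerputnam}) to conclude that $\mathrm{Sum}^p(O_N,P_\mathcal{F})$ is dense. Your route is more elementary and self-contained: you show by hand that $[P_\mathcal{F},S_\mu S_\nu^*]$ is finite rank, using only that $S_i$ maps $\mathcal{F}$ into $\mathcal{F}$ and the Cuntz relations to see that $(1-P_\mathcal{F})S_\mu S_\nu^*P_\mathcal{F}$ vanishes on $e_{\gamma,\emptyset}$ unless $\gamma$ is one of the $|\nu|$ proper prefixes of $\nu$. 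This is a clean direct argument that avoids external input; the paper's citation, on the other hand, gives the result as a special case of a more general mechanism. The ``bookkeeping obstacle'' you flag is not a real issue: the trichotomy (neither $\gamma$ nor $\nu$ a prefix of the other gives zero; $\nu$ a prefix of $\gamma$ lands in $\mathcal{F}$; $\gamma$ a proper prefix of $\nu$ gives the finitely many exceptional vectors) is exhaustive, and the second case of Definition~\ref{defofbasis} never enters because you are computing $S_\mu S_\nu^*S_\gamma$ directly rather than expanding in the basis $e_{\alpha,\beta}$.
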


\begin{proof}
It suffices to prove $\theta$-summability for $\tilde{D}$. This follows since $\mathcal{H}_{n,k}$ is $N^{n+2k}$-dimensional and $-2(|n|+k)\leq \tilde{D}|_{\mathcal{H}_{n,k}}\leq 2(|n|+k)$.
The identity $P_\mathcal{F}=\chi_{[0,\infty)}(D_\kappa)$ follows from the construction using $P_\mathcal{F}L^2(O_N)=\ker(\kappa)$. Since $\tilde{D}|\tilde{D}|^{-1}=D_{\kappa}|D_{\kappa}|^{-1}$, we have $\chi_{[0,\infty)}(\tilde{D})=\chi_{[0,\infty)}(D_\kappa)$. Moreover, for $C=\|B\|$, we have the operator inequalities 
$$\tilde{D}|_{\mathcal{H}_{n,k}}-C\leq D|_{\mathcal{H}_{n,k}}\leq \tilde{D}|_{\mathcal{H}_{n,k}}+C, \quad\forall n,k.$$ 
Thus, by compactness of resolvents and the fact that the sign of $\tilde{D}|_{\mathcal{H}_{n,k}}$ is determined purely by $n$ and $k$, it follows that $\chi_{[0,\infty)}(D)-\chi_{[0,\infty)}(\tilde{D})$ is a finite rank operator. The remaining statements follow from \cite[Proposition 2.2.5]{GM}, which uses methods from \cite{extensionsgoffeng,kaminkerputnam}.
\end{proof}

\section{The Fr\"ohlich functionals on $O_N$}
\label{subsec:5.1}

In \cite{froehlicetal}, Fr\"ohlich et al associated a state with $\theta$-summable spectral triples. If $(A,\mathcal{H},D)$ is a $\theta$-summable spectral triple, one defines the state on $a\in A$ as
$$\phi^D_t(a):=\frac{\mathrm{tr}(a\mathrm{e}^{-tD^2})}{\mathrm{tr}(\mathrm{e}^{-tD^2})},\quad t>0.$$ 
The assumption that an extended limit of $\phi^D_t$ as $t\to 0$ is tracial is used in \cite{froehlicetal}. This is clearly an unrealistic assumption if $A$ admits no traces. In \cite[Corollary 12.3.5]{sukolord} it is shown that under certain finite dimensionality conditions on $(A, \mathcal{H}, D)$, which are slightly stronger than finite summability, extended limits of $\phi_t^D$ are tracial. This provides interesting connections between the tracial property of the states introduced by Fr\"ohlich et al and Connes' tracial obstructions to finite summability.

Since $D$, $D_0$ and $D_\kappa$ are $\theta$-summable, they allow for the definition of Fr\"ohlich functionals on $O_N$. For $t>0$ we define the following states on $\mathbb{B}(L^2(O_N))$:
$$\phi_t(T):=\frac{\mathrm{tr}(T\mathrm{e}^{-tD^2})}{\mathrm{tr}(\mathrm{e}^{-tD^2})}, \; \tilde{\phi}_t(T):=\frac{\mathrm{tr}(T\mathrm{e}^{-t\tilde{D}^2})}{\mathrm{tr}(\mathrm{e}^{-t\tilde{D}^2})} \quad\mbox{and}\quad \phi_t^\kappa(T):=\frac{\mathrm{tr}(T\mathrm{e}^{-tD_\kappa^2})}{\mathrm{tr}(\mathrm{e}^{-tD_\kappa^2})}.$$
A state $\omega\in L^\infty[0,1]^*$ is said to be an extended limit at $0$ if $\omega(f)=0$ whenever $f=0$ near $0$. For an extended limit $\omega$ at $0$, we define $\phi_\omega:=\omega\circ \phi_t$, $\tilde{\phi}_\omega:=\omega\circ \tilde{\phi}_t$ and $\phi_\omega^\kappa:=\omega\circ \phi_t^\kappa$. The next result proves part 3 of Theorem \ref{mainthm}.

\begin{proposition}
For any extended limit $\omega$, there exists probability measures $\tilde{m}_\omega$ and $m_\omega^\kappa$ on $\Omega_N$ such that for $a\in O_N$
$$\tilde{\phi}_\omega(a)=\int_{\Omega_N} \Phi(a)\mathrm{d}\tilde{m}_\omega\quad\mbox{and}\quad \phi_\omega^\kappa(a)=\int_{\Omega_N} \Phi(a)\mathrm{d}m_\omega^\kappa.$$
\end{proposition}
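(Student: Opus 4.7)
The plan is to leverage that both $\tilde{D}$ and $D_{\kappa}$ are diagonal in the orthonormal basis $(\mathrm{e}_{\mu,\nu})_{\mu,\nu\in\mathcal{V}_{N}}$ of $L^{2}(O_{N})$: for $\tilde{D}$ this follows from Equation \eqref{tzero} combined with $c\mathrm{e}_{\mu,\nu}=(|\mu|-|\nu|)\mathrm{e}_{\mu,\nu}$ and $P_{\mathcal{F}}\mathrm{e}_{\mu,\nu}=\delta_{\nu,\emptyset}\mathrm{e}_{\mu,\nu}$, and for $D_{\kappa}$ directly from its definition. Consequently the trace-class heat operator decomposes as $\mathrm{e}^{-t\tilde{D}^{2}}=\sum_{\mu,\nu}\mathrm{e}^{-t\tilde{\lambda}_{\mu,\nu}^{2}}P_{\mu,\nu}$, with $P_{\mu,\nu}$ the rank-one projection onto $\mathrm{e}_{\mu,\nu}$ and $\tilde{\lambda}_{\mu,\nu}$ the corresponding eigenvalue of $\tilde{D}$, so that
\[
\tilde{\phi}_{t}(a)=Z_{t}^{-1}\sum_{\mu,\nu}\mathrm{e}^{-t\tilde{\lambda}_{\mu,\nu}^{2}}L_{\mu,\nu}(a),\qquad L_{\mu,\nu}(a):=\langle \mathrm{e}_{\mu,\nu},a\,\mathrm{e}_{\mu,\nu}\rangle,
\]
with $Z_{t}:=\mathrm{tr}(\mathrm{e}^{-t\tilde{D}^{2}})$, and analogously for $\phi^{\kappa}_{t}$. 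The whole proof thus reduces to showing that each vector state $L_{\mu,\nu}$ on $O_{N}$ factors through the conditional expectation $\Phi:O_{N}\to C(\Omega_{N})$.

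To verify the factorisation $L_{\mu,\nu}=L_{\mu,\nu}\circ\Phi$, by linearity and norm-continuity it suffices to check the generating partial isometries $S_{\alpha}S_{\beta}^{*}$. Since $\Phi(S_{\alpha}S_{\beta}^{*})=\delta_{\alpha,\beta}S_{\alpha}S_{\alpha}^{*}$, we need $L_{\mu,\nu}(S_{\alpha}S_{\beta}^{*})=0$ whenever $\alpha\neq\beta$. For $|\alpha|\neq|\beta|$ this is immediate from gauge invariance: $\mathrm{e}_{\mu,\nu}$ is a $c$-eigenvector, so $L_{\mu,\nu}$ is $U(1)$-invariant, but $\alpha_{z}(S_{\alpha}S_{\beta}^{*})=z^{|\alpha|-|\beta|}S_{\alpha}S_{\beta}^{*}$ has non-trivial gauge weight. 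The case $|\alpha|=|\beta|$ with $\alpha\neq\beta$ requires a direct combinatorial computation: expand $\mathrm{e}_{\mu,\nu}$ according to Definition \ref{defofbasis} and evaluate $\phi(S_{\nu}S_{\mu}^{*}S_{\alpha}S_{\beta}^{*}S_{\mu}S_{\nu}^{*})$, together with the analogous cross-terms that appear when $t(\mu)=t(\nu)$, using the KMS-state identity $\phi(S_{\rho}S_{\sigma}^{*})=\delta_{\rho,\sigma}N^{-|\rho|}$ and the Cuntz relations. A case analysis according to whether $\mu$ is incompatible with $\beta$, a prefix of $\beta$, or has $\beta$ as a prefix shows that every surviving contribution forces $\alpha=\beta$, contradicting the assumption. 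This combinatorial bookkeeping is the principal technical step of the proof.

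With $L_{\mu,\nu}=L_{\mu,\nu}\circ\Phi$ established, summing with the Boltzmann weights yields $\tilde{\phi}_{t}=\tilde{\phi}_{t}\circ\Phi$ for every $t>0$, and this identity is preserved under the extended limit $\omega$, giving $\tilde{\phi}_{\omega}=\tilde{\phi}_{\omega}\circ\Phi$. The restriction $\tilde{\phi}_{\omega}|_{C(\Omega_{N})}$ is a state on the unital commutative $C^{*}$-algebra $C(\Omega_{N})$ (positivity and normalisation are inherited), so by the Riesz-Markov theorem it is represented by a unique probability measure $\tilde{m}_{\omega}$ on $\Omega_{N}$, whence $\tilde{\phi}_{\omega}(a)=\tilde{\phi}_{\omega}(\Phi(a))=\int_{\Omega_{N}}\Phi(a)\,\mathrm{d}\tilde{m}_{\omega}$ for all $a\in O_{N}$. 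The corresponding statement for $\phi^{\kappa}_{\omega}$ follows verbatim, using the diagonality of $D_{\kappa}$ in $(\mathrm{e}_{\mu,\nu})$ and the same factorisation lemma for $L_{\mu,\nu}$ to produce $m^{\kappa}_{\omega}$.
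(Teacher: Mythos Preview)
Your proposal is correct and follows essentially the same route as the paper: both reduce the statement to showing that each vector state $L_{\mu,\nu}(a)=\langle \mathrm{e}_{\mu,\nu},a\,\mathrm{e}_{\mu,\nu}\rangle$ factors through $\Phi$, which the paper establishes via an explicit lemma computing $\langle \mathrm{e}_{\mu,\nu},S_{\rho}S_{\sigma}^{*}\mathrm{e}_{\mu,\nu}\rangle$ and exhibiting the $\delta_{\rho,\sigma}$ factor. Your gauge-invariance shortcut for the case $|\alpha|\neq|\beta|$ is a mild simplification over the paper's uniform computation, but the remaining combinatorial step and the Riesz--Markov conclusion are the same.
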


Using the fact that $\Phi(S_\mu S_\nu^*)=\delta_{\mu,\nu}S_\mu S_\mu^*$ the proposition is immediate from the next lemma which in turn is a computational exercise.

\begin{lemma}
For any finite words $\mu,\nu,\sigma,\rho\in \mathcal{V}_N$, 
\begin{align*}
\langle \mathrm{e}_{\mu,\nu}, &S_\rho S_\sigma^*\mathrm{e}_{\mu,\nu}\rangle_{L^2(O_N)}\\
&=
\begin{cases}
\delta_{\rho,\sigma}\delta_{|\sigma\vee \mu|,\min\{|\mu|,|\sigma|}N^{\min\{0,|\mu|-|\rho|\}}, \quad & t(\mu)\neq t(\nu),\\
\\
(N-1)^{-1}\delta_{\rho,\sigma}\bigg(\delta_{|\sigma\vee \mu|,\min\{|\mu|,|\sigma|}(N-2)N^{\min\{0,|\mu|-|\rho|\}}+\quad & t(\mu)= t(\nu)\\
\qquad\qquad\qquad\qquad\qquad\qquad\delta_{|\sigma\vee \underline{\mu}|,\min\{|\mu|-1,|\sigma|}N^{\min\{0,|\mu|-|\rho|-1\}}\bigg).\end{cases}.
\end{align*}
Here $\mu$ starts with $\underline{\mu}$ and $|\underline{\mu}|=|\mu|-1$.
\end{lemma}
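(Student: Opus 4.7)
The approach is by direct computation. The plan is to convert the inner product into a state evaluation via the GNS identification
\[\langle \mathrm{e}_{\mu,\nu}, S_\rho S_\sigma^* \mathrm{e}_{\mu,\nu}\rangle_{L^2(O_N)} = \phi(\mathrm{e}_{\mu,\nu}^* S_\rho S_\sigma^* \mathrm{e}_{\mu,\nu}),\]
substitute Definition~\ref{defofbasis}, and reduce every resulting monomial using the Cuntz relations together with the KMS identity $\phi(S_\alpha S_\beta^*)=\delta_{\alpha,\beta}N^{-|\alpha|}$.

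First I would dispose of the case $t(\mu)\neq t(\nu)$, where $\mathrm{e}_{\mu,\nu}=N^{|\nu|/2}S_\mu S_\nu^*$ and only the single term $N^{|\nu|}\phi(S_\nu S_\mu^*S_\rho S_\sigma^* S_\mu S_\nu^*)$ appears. The products $S_\mu^*S_\rho$ and $S_\sigma^*S_\mu$ each collapse to $S_\xi$, $S_\xi^*$, or $0$ according as $\rho$ extends $\mu$, $\mu$ extends $\rho$, or the two words are incomparable (and analogously for $\sigma,\mu$). Combined with $\phi(S_aS_b^*)\neq 0\iff a=b$, nonvanishing forces $\rho=\sigma$ and the comparability condition $|\sigma\vee\mu|=\min\{|\mu|,|\sigma|\}$; a short case split on $|\rho|\ge|\mu|$ versus $|\rho|<|\mu|$ then produces the exponent $\min\{0,|\mu|-|\rho|\}$ of $N$.

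Second I would treat the case $t(\mu)=t(\nu)$. Here $\mathrm{e}_{\mu,\nu}^*S_\rho S_\sigma^*\mathrm{e}_{\mu,\nu}$ expands into four monomials $A_1,A_2,A_3,A_4=\phi(S_\alpha S_\beta^* S_\rho S_\sigma^* S_\gamma S_\delta^*)$ with $(\alpha,\beta;\gamma,\delta)$ ranging over $(\nu,\mu;\mu,\nu)$, $(\underline{\nu},\underline{\mu};\mu,\nu)$, $(\nu,\mu;\underline{\mu},\underline{\nu})$, $(\underline{\nu},\underline{\mu};\underline{\mu},\underline{\nu})$ weighted by $1,-N^{-1},-N^{-1},N^{-2}$ and a global factor $\tfrac{N}{N-1}N^{|\nu|}$. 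Each $A_i$ is computed exactly as before, and nonvanishing again forces $\rho=\sigma$. The diagonal terms $A_1,A_4$ produce the comparability conditions $|\sigma\vee\mu|=\min\{|\mu|,|\sigma|\}$ and $|\sigma\vee\underline{\mu}|=\min\{|\mu|-1,|\sigma|\}$ respectively, while the cross terms $A_2,A_3$ rely essentially on $t(\mu)=t(\nu)$ to ensure the concatenations $\underline{\nu}\cdot t(\mu)\rho'=\nu\rho'$ match in the KMS evaluation.

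Finally I would assemble the four contributions. When both comparability conditions hold (e.g.\ $\sigma$ extends $\mu$), all four $A_i$ share a common numerical form $N^{-|\nu|-|\rho'|}$ with $|\rho'|=|\rho|-|\mu|$, and the algebraic identity $\tfrac{N}{N-1}(1-2N^{-1}+N^{-2})=\tfrac{N-1}{N}$ combines to give exactly $(N-1)^{-1}\bigl[(N-2)N^{|\mu|-|\rho|}+N^{|\mu|-|\rho|-1}\bigr]$, matching the sum of the two terms in the stated formula. The more subtle sub-case is when $\sigma$ is comparable with $\underline{\mu}$ but not with $\mu$ (equivalently $\sigma=\underline{\mu}\alpha$ with $\alpha_1\neq t(\mu)$): then $A_1,A_2,A_3$ all vanish because each contains either $S_{\sigma}^*S_\mu=0$ or $S_{\mu}^*S_\rho=0$, and only $A_4$ survives, producing precisely the second summand of the formula in isolation. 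Organizing this case split on which of the two comparabilities hold, and verifying that the weighted combination of surviving $A_i$'s reproduces the two Kronecker-delta-separated terms with the correct numerical coefficients $(N-2)$ and $1$, is the main combinatorial obstacle.
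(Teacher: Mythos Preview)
Your approach is correct and is precisely the direct computation the paper has in mind: the paper states only that the lemma ``is a computational exercise'' and gives no further argument, and your plan---reduce to $\phi(\mathrm{e}_{\mu,\nu}^*S_\rho S_\sigma^*\mathrm{e}_{\mu,\nu})$ via GNS, expand using Definition~\ref{defofbasis}, simplify with the Cuntz relations and $\phi(S_\alpha S_\beta^*)=\delta_{\alpha,\beta}N^{-|\alpha|}$, and then organize the case split on comparability of $\sigma$ with $\mu$ and with $\underline{\mu}$---carries this out in full. The identification of the cross terms $A_2,A_3$ via $\underline{\nu}\,t(\mu)=\nu$ and the isolation of $A_4$ in the sub-case $\sigma=\underline{\mu}\alpha$ with $\alpha_1\neq t(\mu)$ are exactly the points where the hypothesis $t(\mu)=t(\nu)$ and the two separate Kronecker deltas in the stated formula enter, so your outline is complete.
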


\begin{remark}
It is reasonable to expect that $\phi_\omega$, $\tilde{\phi}_\omega$ and $\phi_\omega^\kappa$ are in fact related to the KMS state $\phi$. We have not been able to prove this. A simple induction procedure, or using uniqueness of KMS states on $O_N$ combined with \cite{lacanesh}, shows that it suffices to prove that $\phi_\omega(S_\nu S_\nu^*)=N\phi_\omega(S_{\nu j}S_{\nu j}^*)$ for any finite word $\nu\in \mathcal{V}_N$ and $j=1,\ldots, N$.
\end{remark}

\section*{Acknowledgement}
We thank the MATRIX for the program \emph{Refining $C^*$-algebraic invariants for dynamics using $KK$-theory} in Creswick, Australia (2016) where this work came into being. We are grateful to the support from Leibniz University Hannover where this work was initiated. We also thank Francesca Arici, Robin Deeley, Adam Rennie and Alexander Usachev for fruitful discussions and helpful comments, and the anonymous referee for a careful reading of the manuscript. The first author was supported by the Swedish Research Council Grant 2015-00137 and Marie Sklodowska Curie Actions, Cofund, Project INCA 600398.

\end{document}